\newcommand{\N}{\ensuremath{\mathbb{N}}}
\newcommand{\C}{\ensuremath{\mathbb{C}}}
\newcommand{\D}{\ensuremath{\mathbb{D}}}
\newcommand{\Sone}{\ensuremath{\mathbb{S}^{1}}}
\newcommand{\eps}{\ensuremath{\varepsilon}}
\DeclareMathOperator\capacity{Cap}
\newtheorem{theorem}{Theorem}[section]
\newtheorem{lemma}[theorem]{Lemma}
\newtheorem{proposition}[theorem]{Proposition}
\newtheorem{corollary}[theorem]{Corollary}
\theoremstyle{definition}
\newtheorem{definition}[theorem]{Definition}
\theoremstyle{remark}
\newtheorem{remark}[theorem]{Remark}
\numberwithin{equation}{section}
\begin{document}
\setcounter{page}{1}

\color{darkgray}{
\noindent 

\centerline{}

\centerline{}

\title[Every circle homeomorphism is the composition of two weldings]{Every circle homeomorphism is the composition of two weldings}

\author[Alex Rodriguez]{Alex Rodriguez}
\address{Department of Mathematics, Stony Brook University, New York, USA.\\
	\textsc{\newline \indent 
	   \href{https://orcid.org/0000-0001-9097-4025%
	     }{\includegraphics[width=1em,height=1em]{orcid2} {\normalfont https://orcid.org/0000-0001-9097-4025}}
	       }}}
\email{\textcolor[rgb]{0.00,0.00,0.84}{alex.rodriguez@stonybrook.edu}}

\subjclass[2020]{Primary 30C85; Secondary 30E25.}

\keywords{Complex Analysis, Conformal welding, Potential theory in the plane.}

\date{January 17, 2025; revised February 5, 2025.
\newline \indent The author is partially supported by NSF grant DMS 2303987 and the Simons Foundation} 

\begin{abstract}
We show that every orientation-preserving circle homeomorphism is a composition of two conformal welding homeomorphisms, which implies that conformal welding homeomorphisms are not closed under composition. Our approach uses the log-singular maps introduced by Bishop. The main tool that we introduce are log-singular sets, which are zero capacity sets that admit a log-singular map that maps their complement to a zero capacity set.
\end{abstract} 

\maketitle


\section{Introduction}

Let $\gamma\subset\C_{\infty}$ be a Jordan curve and consider $\Omega, \Omega^{*}$ the two complementary components of $\gamma$ in $\C_{\infty}$, the Riemann sphere. By the Riemann mapping theorem there are conformal maps $f\colon\D\to\Omega$ and $g\colon\C_{\infty}\setminus\overline{\D}=\D^{*}\to\Omega^{*}$, as represented in Figure \ref{figure:welding_def}. By the Carath\'eodory-Torhorst theorem, these conformal maps extend to be homeomorphisms of the unit circle to $\gamma$. This yields the orientation-preserving homeomorphism $h=g^{-1}\circ f\colon\Sone\to\Sone$ which we call a \textit{conformal welding} (or \textit{welding} for short).

It is well known that there are orientation-preserving circle homeomorphisms that are not weldings. For instance, Oikawa provided in \cite[Example 1]{MR125956} a family of such examples that are biH\"older. However, well-behaved self-maps of the circle, like quasisymmetric maps, are weldings. This is the so called \textit{fundamental theorem of conformal welding} and it was first proven by Pfluger in \cite{Pfluger} by using the measurable Riemann mapping theorem. Shortly after, Lehto and Virtanen \cite{MR125962} gave a different proof, also by using quasiconformal mappings. See Hamilton \cite{Hamilton:WeldingSurvey} for a survey on conformal welding.

In this paper we prove the following.

\begin{theorem}\label{theorem:mainWelding}
Every orientation-preserving circle homeomorphism is the composition of two conformal weldings.
\end{theorem}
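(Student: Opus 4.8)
The plan is to use Bishop's log-singular maps to absorb the entire ``wild part'' of an arbitrary orientation-preserving homeomorphism $h\colon\Sone\to\Sone$ into sets of zero logarithmic capacity, on which conformal welding places no restriction whatsoever. The argument rests on a sufficient condition for a circle homeomorphism to be a welding: if a homeomorphism $g\colon\Sone\to\Sone$ agrees, outside a closed set $E$ of zero logarithmic capacity, with the boundary correspondence $\psi_2^{-1}\circ\psi_1$ of conformal maps $\psi_1,\psi_2$ onto the two complementary components of a Jordan curve, and if $g(E)$ again has zero logarithmic capacity, then $g$ is an honest conformal welding --- the point being that a set of zero capacity is removable for the Carath\'eodory extension, so it may be deleted from the welding data and reinserted afterwards. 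Thus the only obstruction a single welding must respect is a capacity balance between the two sides of the curve, and the content of Theorem \ref{theorem:mainWelding} is that two weldings, each allowed to concentrate capacity, can be composed to produce a map subject to no such balance at all, hence an arbitrary homeomorphism.

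Carrying this out, I would proceed in three stages. First, a normalisation: quasisymmetric maps are weldings by the fundamental theorem and are closed under composition and inversion, so pre- and post-composing $h$ with quasisymmetric maps is for free; this reduces $h$ to a convenient normal form, for example one that is real-analytic away from, and ``purely singular'' along, a prescribed Cantor set of zero capacity. Second, I would develop the theory of log-singular sets: a zero-capacity closed set $E\subset\Sone$ equipped with a log-singular homeomorphism $\phi$ with $\capacity(\phi(\Sone\setminus E))=0$, so that $\phi$ collapses the complement of $E$ onto a zero-capacity set while blowing $E$ itself up onto a set whose complement has zero capacity. The two things that must be proven about this class are (i) that it is large and flexible --- in particular that, given the normalised $h$, one can select a log-singular set $E$ adapted to it, so that the singular part of $h$ is supported on the complementary arcs of $E$ and $h(E)$ is itself contained in a log-singular set; and (ii) that the log-singular map attached to a log-singular set can be realised as, or corrected by a quasisymmetric map to, a bona fide conformal welding, which is where Bishop's construction of log-singular weldings is invoked. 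Third, I would take $h_1$ to be the welding produced by such an adapted $E$ and declare $h_2:=h\circ h_1^{-1}$; by construction $h_1$ transports the wild behaviour of $h$ so that $h_2$ coincides, off a set of zero capacity, with the boundary correspondence of a pair of conformal maps and sends that exceptional set to a set of zero capacity, whence $h_2$ is a welding by the sufficient condition and $h=h_2\circ h_1$ is the desired factorisation.

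The step I expect to be the main obstacle is part (i) together with the bookkeeping in the third stage. Because $h$ is an \emph{arbitrary} homeomorphism it need not send zero-capacity sets to zero-capacity sets, and one has no regularity to exploit; making the factorisation work for every $h$ is precisely why one needs log-singular \emph{sets} --- an entire, stable family of zero-capacity sets --- rather than a single log-singular map. One must show the family is rich enough that a set $E$ can always be found with both $E$ and $h(E)$ log-singular, with complementary-arc structure compatible with the normal form of $h$, and then verify carefully that the conformal maps underlying $h_1$ and $h_2$ extend continuously across the concentrated zero-capacity sets and that the relevant conformal structures actually weld to Jordan curves. This Carath\'eodory-extension and removability analysis is the technical heart of the proof.
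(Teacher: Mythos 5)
Your overall skeleton is the right one and matches the paper: build a log-singular welding $h_1$ adapted to the given homeomorphism $h$, declare $h_2=h\circ h_1^{-1}$, and show $h_2$ is also a welding; the technical heart is indeed the existence of a ``log-singular set'' $E$ compatible with $h$. But two steps, as you have stated them, are wrong or superfluous.

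First, the ``sufficient condition'' you lean on to conclude $h_2$ is a welding is not Bishop's criterion and is not available. You claim that if $g$ agrees with the boundary correspondence of a Jordan curve off a zero-capacity set $E$ and $\capacity(g(E))=0$, then $g$ is a welding because zero-capacity sets are ``removable for the Carath\'eodory extension.'' No such removability theorem holds in this setting; indeed the whole point of flexible curves (and of Bishop's \cite{ChrisWeldingAnnals}) is that Jordan curves with zero-capacity welding sets are spectacularly \emph{non}-removable. The criterion one actually uses is the opposite-looking one: Theorem 3 of \cite{ChrisWeldingAnnals} says that a homeomorphism $g$ is a welding if $\capacity(E)=0$ and $\capacity\bigl(g(\Sone\setminus E)\bigr)=0$, i.e.\ $g$ collapses the \emph{complement} of the exceptional set to zero capacity. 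In the third stage you should therefore not argue that $h_2$ agrees with a welding off a small set; you should check directly that $h_2=h\circ h_1^{-1}$ is itself log-singular. This is precisely what the construction gives: if $E$ is a log-singular set for $h_1$ and $\capacity(h(E))=0$, then $E':=h_1(\Sone\setminus E)$ has zero capacity and $h_2(\Sone\setminus E')=h(E)$ has zero capacity, so $h_2$ is log-singular and Bishop's theorem applies to it as well. Replace your removability claim with this computation.

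Second, the normalisation stage is unnecessary and almost certainly impossible as stated. You cannot, by pre- and post-composing with quasisymmetric maps, reduce an arbitrary circle homeomorphism to one that is real-analytic off a zero-capacity Cantor set: a generic homeomorphism has uncontrollable modulus of continuity on a dense set, and quasisymmetric conjugation cannot remove this. More to the point, no normalisation is needed. The paper's key lemma (its Lemma \ref{lemma:find_logsingular}) constructs, for an \emph{arbitrary} homeomorphism $\phi$, a nested partition of the circle whose ``odd'' pieces shrink fast enough that the resulting Cantor-type set $E$ and its image $\phi(E)$ both have zero capacity, while $E$ simultaneously carries the address structure needed (Lemma \ref{lemma:logsingular}) to produce a log-singular map $h_1$ collapsing $\Sone\setminus E$. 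The fact that $\phi$ need not preserve zero-capacity sets is exactly why one must build $E$ hand-in-hand with $\phi$, rather than first normalising $\phi$ and then choosing $E$. Dropping stage 1 and sharpening stage 2(i) to the statement ``there is a log-singular set $E$ with $\capacity(\phi(E))=0$'' yields the paper's argument.
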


In particular,

\begin{corollary}\label{corollary:composition}
The set of conformal welding homeomorphisms is not closed under composition.
\end{corollary}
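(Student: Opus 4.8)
Since a conformal welding is orientation-preserving by construction, Corollary \ref{corollary:composition} is immediate from Theorem \ref{theorem:mainWelding}: if the weldings were closed under composition then every orientation-preserving circle homeomorphism would be a welding, contradicting the existence of non-welding homeomorphisms such as Oikawa's bi-H\"older examples \cite{MR125956}. So the content lies in Theorem \ref{theorem:mainWelding}. Call an orientation-preserving homeomorphism $h$ of $\Sone$ \emph{log-singular} if there is a closed set $E\subset\Sone$ with $\capacity(E)=0$ and $\capacity\bigl(h(\Sone\setminus E)\bigr)=0$, and call such an $E$ a \emph{log-singular set}. A one-line computation shows this class is closed under inversion: $h^{-1}$ is log-singular with exceptional set $h(\Sone\setminus E)$, since $h^{-1}\bigl(\Sone\setminus h(\Sone\setminus E)\bigr)=E$. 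The argument then splits into \textbf{(A)} every log-singular map is a conformal welding, and \textbf{(B)} every orientation-preserving circle homeomorphism is the composition of two log-singular maps; together these give the theorem.

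For \textbf{(B)}, fix an orientation-preserving homeomorphism $\phi$. I claim it is enough to produce a closed log-singular set $G\subset\Sone$, with an accompanying log-singular map $\theta$ (so $\capacity(\theta(\Sone\setminus G))=0$), such that moreover $\capacity\bigl(\phi^{-1}(G)\bigr)=0$ --- that is, a log-singular set that $\phi^{-1}$ does not inflate to positive capacity. Indeed, setting $\psi_{1}:=\theta^{-1}$ and $\psi_{2}:=\theta\circ\phi$ we get $\psi_{1}\circ\psi_{2}=\phi$; the factor $\psi_{1}$ is log-singular as the inverse of one; and $\psi_{2}$ is log-singular with exceptional set $\phi^{-1}(G)$, because $\psi_{2}\bigl(\Sone\setminus\phi^{-1}(G)\bigr)=\theta(\Sone\setminus G)$ has zero capacity. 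By \textbf{(A)} both $\psi_{1},\psi_{2}$ are weldings.

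To build such a $G$ I would use a generalized Cantor set whose $2^{n}$ arcs at level $n$ have lengths $\ell_{n}\to0$ chosen to meet two requirements at once. First, $\ell_{n}$ must decay rapidly enough for the construction of a log-singular map $\theta$ witnessing $G$ to go through --- this is precisely the role of the log-singular-set machinery --- and in particular for $\capacity(G)=0$ (the relevant criterion being $\sum_{n}2^{-n}\log(1/\ell_{n})=\infty$). Second, for $\capacity(\phi^{-1}(G))=0$: if $\omega$ is the modulus of continuity of $\phi^{-1}$, the level-$n$ arcs of $G$ have $\phi^{-1}$-images of diameter at most $\omega(\ell_{n})$, so $\phi^{-1}(G)$ is contained in a Cantor-type set with those parameters, and choosing $\ell_{n}$ small enough at stage $n$ that $\log(1/\omega(\ell_{n}))\ge 2^{n}$ forces $\sum_{n}2^{-n}\log(1/\omega(\ell_{n}))=\infty$. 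Both requirements only ask that $\ell_{n}$ be sufficiently small at each stage, so they are compatible.

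I expect \textbf{(A)} to be the main obstacle. To realize a prescribed log-singular $h$ as a welding one must construct a Jordan curve $\gamma$ together with conformal maps $f\colon\D\to\Omega$ and $g\colon\D^{*}\to\Omega^{*}$ with $g^{-1}\circ f=h$; since harmonic measure from $\Omega$ (resp.\ $\Omega^{*}$) pulls back to arclength on $\Sone$ via $f$ (resp.\ $g$), this forces the two-sided harmonic measures of $\gamma$ to be mutually singular in exactly the capacitary manner dictated by $h$. One cannot finish by invoking a conformal-removability theorem, since zero-capacity sets need not be conformally removable; the curve must instead be built by hand, in the spirit of Bishop's log-singular maps, so that its ``bad'' part is carried by a capacity-zero set that is invisible to both uniformizations. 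A secondary point is to verify the compatibility used above, namely that log-singular sets exist with arbitrarily fast gap decay, which should come out of the flexibility of the construction.
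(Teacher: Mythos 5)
Your top-level reduction is exactly the paper's: Corollary~\ref{corollary:composition} follows from Theorem~\ref{theorem:mainWelding} together with the existence of a non-welding homeomorphism (Oikawa's examples), and the factorization $\phi=\psi_{1}\circ\psi_{2}$ with $\psi_{1}=\theta^{-1}$, $\psi_{2}=\theta\circ\phi$ is a mirror image of the paper's $\phi=(\phi\circ h^{-1})\circ h$. Item~\textbf{(A)} is not an obstacle at all: it is Bishop's Theorem~3 in \cite{ChrisWeldingAnnals}, which the paper cites and uses as a black box; the genuine content of the paper is your item~\textbf{(B)}.

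But your construction for \textbf{(B)} has a fatal flaw, and it starts in the definition. You define a log-singular homeomorphism via a \emph{closed} exceptional set $E$, and you propose to build the log-singular set $G$ as a generalized Cantor set, which is compact. No such set can exist. If $E\subset\Sone$ is closed with $\capacity(E)=0$, then $E$ contains no arc and hence is nowhere dense, so $\Sone\setminus E$ is dense and open; a homeomorphism carries a dense open set to a dense open set, which contains an arc and therefore has positive capacity (indeed $\capacity$ of an arc of diameter $d$ is at least $d/4$). Thus $\capacity\bigl(h(\Sone\setminus E)\bigr)>0$ for every homeomorphism $h$, and your definition is vacuous; in particular a single Cantor set $G$ can never serve as a log-singular set, no matter how fast its scales $\ell_{n}$ decay. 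The same objection kills the intended role of your set $\phi^{-1}(G)$.

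What the paper does, and what you would need, is a Borel but emphatically non-closed exceptional set. The set $E$ of Lemma~\ref{lemma:logsingular} has the limsup form $E=\bigcap_{m}\bigcup_{n\geq m}(\cdots)$, built from a nested partition in which, at stage $n+1$, each interval splits into $L_{n+1}$ children and $E$ retains the odd-labeled ones. A point lies in $E$ iff infinitely many of its address digits are odd, and lies in $\Sone\setminus E$ iff its digits are eventually even; both sets are dense and neither is open nor closed. This structure is what allows both $\capacity(E)=0$ (by making the odd-labeled intervals small and using countable subadditivity of capacity level by level, not a Cantor-set criterion) and $\capacity\bigl(h(\Sone\setminus E)\bigr)=0$ (by making the even-labeled image intervals small). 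Your modulus-of-continuity idea --- shrink the retained intervals enough at each stage that $\phi$ (or $\phi^{-1}$) cannot inflate them --- is essentially correct and is exactly what Lemma~\ref{lemma:find_logsingular} does, but it must be carried out on this limsup/liminf scaffold rather than on a Cantor set.
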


The first proof of Corollary \ref{corollary:composition} appeared in Vainio's thesis \cite[Section 2.2]{Vainio:Welding}. Later in the introduction, we will show how this follows from some previous work of Bishop \cite{ChrisWeldingAnnals} and the examples provided by Oikawa \cite{MR125956}. Another example was recently given by McMullen \cite{McMullen:Welding}.

\begin{figure}[h]
\includegraphics[scale=1]{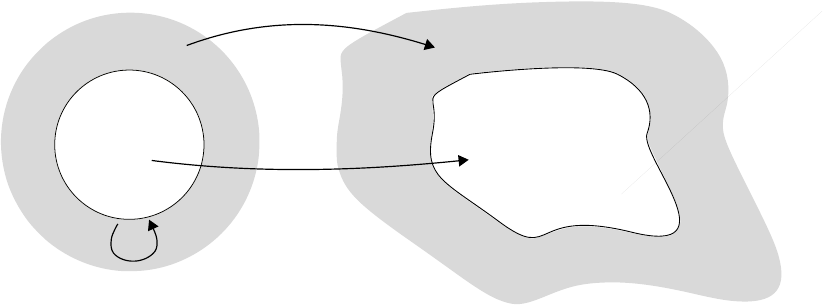}
\centering
\setlength{\unitlength}{\textwidth}
\put(-0.62,0.172){$f$}
\put(-0.62,0.31){$g$}
\put(-0.19,0.26){$\gamma$}
\put(-0.9,0.26){$\Sone$}
\put(-0.772,0.059){$h=g^{-1}\circ f$}
\caption{Definition of conformal welding. The conformal maps $f$ and $g$ map the interior and exterior of the unit circle to the two complementary components of a Jordan curve $\gamma$. The map $h\colon\Sone\to\Sone$ is an orientation-preserving homeomorphism of $\Sone$.}
\label{figure:welding_def}
\end{figure}

There have been some important breakthroughs regarding conformal weldings, like Bishop's paper \cite{ChrisWeldingAnnals}, in which he proves that a very \textit{badly} behaved class of circle homeomorphisms are weldings. Some work from Astala, Jones, Kupiainen and Saksman \cite{JonesActa2011} proves that regularity \textit{at most scales} is a sufficient condition for a homeomorphism to be a welding (which is satisfied for quasisymmetric maps). However, many important questions are still open, like showing whether or not the collection of weldings is a Borel subset of the space of circle homeomorphisms. In \cite[Section 9]{Chris:Borel} Bishop proves that it is at least analytic. He also raises several questions regarding weldings and their relations to other problems in geometric function theory.

Conformal welding has been used in many other areas of mathematics. For example, Sheffield \cite{Sheffield:Welding} and Duplantier, Miller and Sheffield \cite{Sheffield:LQG} used conformal welding to relate Liouville quantum gravity and Schramm-Loewner evolution curves. See also Ang, Holden and Sun \cite{Ang:SLE_Welding}, where they prove that SLE$_{\kappa}$ measure, for $\kappa\in(0,4)$, arises naturally from the conformal welding of two $\sqrt{\kappa}$-Liouville quantum gravity disks. Sharon and Mumford \cite{Mumford:Welding} used conformal welding in their applications to computer vision. Conformal welding also lies at the basis of the construction of universal Teichm\"uller space (see for instance Lehto's book \cite{Lehto:Teichmuller}).

What maps can be used to prove Theorem \ref{theorem:mainWelding}? Well-behaved classes of homeomorphisms, like diffeomorphisms, biLipschitz maps, quasisymmetric maps and biH\"older maps, cannot be used since they each form a proper subgroup of the group of circle homeomorphisms. However, there are circle homeomorphisms with arbitrarily bad modulus of continuity.

We say that a circle homeomorphism $h\colon\Sone\to\Sone$ is \textit{log-singular} if there is a (logarithmic) zero capacity Borel set $E\subset\Sone$ so that $h(\Sone\setminus E)$ has zero capacity. Log-singular maps are stronger versions of the better known singular maps $\phi$ so that $\phi'=0$ a.e., since they map a set of full Lebesgue measure to a measure zero set.
One of the main results obtained in \cite{ChrisWeldingAnnals} is the following.

\begin{figure}[h]
\includegraphics[scale=1]{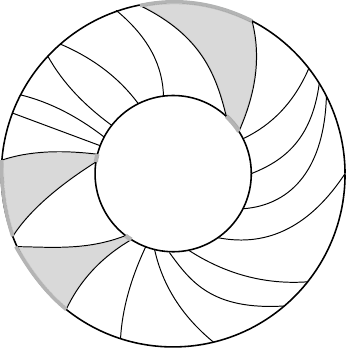}
\centering
\setlength{\unitlength}{\textwidth}
\put(-0.43,0.065){$h(I_{3})$}
\put(-0.25,0.135){$I_{3}$}
\put(-0.47,0.17){$h(I_{2})$}
\put(-0.28,0.214){$I_{2}$}
\put(-0.18,0.412){$h(I_{1})$}
\put(-0.158,0.24){$I_{1}$}
\caption{Representation of a log-singular circle homeomorphism $h\colon\Sone\to\Sone$. The arcs $I_{j}$, which have small capacity, are mapped to the arcs $h(I_{j})$, which have larger capacity.}
\label{figure:logsingular_ex}
\end{figure}

\begin{theorem}[\cite{ChrisWeldingAnnals} Theorem 3]\label{Theorem:ChrisLogSingular}
Every log-singular circle homeomorphism $h\colon\Sone\to\Sone$ is a conformal welding homeomorphism.
\end{theorem}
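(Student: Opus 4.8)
The plan is to exhibit, from a given log-singular $h$, a Jordan curve $\gamma$ together with conformal maps $f\colon\D\to\Omega$ and $g\colon\D^{*}\to\Omega^{*}$ onto its two complementary components satisfying $g^{-1}\circ f = h$ on $\Sone$, by realizing $h$ as a limit of weldings and using log-singularity to force the limit to be a welding. Fix a zero capacity Borel set $E\subset\Sone$ with $\capacity(h(\Sone\setminus E))=0$. First I would choose piecewise-linear (hence bi-Lipschitz, hence quasisymmetric) circle homeomorphisms $h_{n}\to h$ uniformly, taken adapted to $E$: $h_{n}$ interpolates $h$ on a partition of $\Sone$ that is fine away from $E$ and deliberately coarse near $E$, so that the later analysis can concentrate on the negligible set. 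By the fundamental theorem of conformal welding each $h_{n}$ is a welding, so there are a Jordan curve $\gamma_{n}$ and conformal maps $f_{n}\colon\D\to\Omega_{n}$, $g_{n}\colon\D^{*}\to\Omega_{n}^{*}$ with $g_{n}^{-1}\circ f_{n}=h_{n}$, and after a M\"obius normalization I may assume these send fixed base points to fixed base points with, say, $\Omega_{n}$ and $\Omega_{n}^{*}$ each containing a fixed round disk.

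Next I would pass to a subsequence along which $f_{n}\to f$ and $g_{n}\to g$ locally uniformly. The normalization together with the Koebe distortion theorem keeps the families normal and rules out degeneration, so by Carath\'eodory kernel convergence $f$ is a conformal map of $\D$ onto a domain $\Omega$ and $g$ a conformal map of $\D^{*}$ onto a domain $\Omega^{*}$ with $\Omega\cap\Omega^{*}=\emptyset$. The candidate welding curve is $\gamma=\partial\Omega=\partial\Omega^{*}$, and one wants: (a) $\gamma$ is a Jordan curve whose complementary components are exactly $\Omega$ and $\Omega^{*}$; (b) $f$ and $g$ extend continuously to $\Sone$; and (c) the boundary values satisfy $f=g\circ h$ on $\Sone$, which then says precisely that $h=g^{-1}\circ f$ is the welding of $\gamma$.

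The heart of the matter, and the step I expect to be the main obstacle, is establishing (a)--(c): this is exactly where a general homeomorphism fails, as in Oikawa's examples, where the limiting $\Omega$ and $\Omega^{*}$ leave a gap with interior or the welding relation is destroyed in the limit, and it is where log-singularity must be used. The mechanism is potential-theoretic. The geometry of the approximating curves $\gamma_{n}$ is governed by harmonic measure and extremal length: the subarc $f_{n}(I)$ of $\gamma_{n}$ corresponding to an arc $I\subset\Sone$ from the inside equals $g_{n}(h_{n}(I))$ from the outside, so the amount by which $\gamma_{n}$ can bend or spread out over a given region is controlled by the capacities of the relevant arcs on both sides simultaneously. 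On the dyadic arcs making up $\Sone\setminus E$ the targets $h_{n}(I)$ lie in a set of zero capacity, which pins down the conformal geometry there, giving summable control of the increments from $\gamma_{n}$ to $\gamma_{n+1}$ and equicontinuity up to $\Sone$ of the $f_{n}$ restricted to those arcs; on the arcs covering $E$ the increments need not be small, but the total capacity of these arcs and of their images tends to $0$, and zero capacity sets are negligible for harmonic measure, so they do not affect the limiting boundary. Assembling these estimates should give that $f$ and $g$ extend continuously to $\Sone$, that $f(\Sone)=g(\Sone)=\gamma$ is a Jordan curve bounding $\Omega$ and $\Omega^{*}$, and that $f_{n}\to f$, $g_{n}\to g$ compatibly on the boundary, whence $f=g\circ h$.

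I expect the delicate points to be three. First, maintaining nondegeneration of the normalized maps through the limit, so that neither $\Omega$ nor $\Omega^{*}$ collapses; this rests on the two domains partitioning the sphere. Second, the scale-by-scale modulus estimate that converts the two capacity hypotheses, on $E$ and on $h(\Sone\setminus E)$, into summable control of the curve increments and into local connectivity of $\partial\Omega$ and $\partial\Omega^{*}$ away from $E$; this is the technical core. Third, the passage of the welding identity $g_{n}^{-1}\circ f_{n}=h_{n}$ to the limit, which is not automatic and again reduces, via the preceding estimates, to a statement about the zero capacity exceptional set.
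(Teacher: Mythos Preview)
This theorem is not proved in the present paper at all: it is quoted from Bishop's paper \cite{ChrisWeldingAnnals} (his Theorem~3) and used here as a black box to deduce Theorem~\ref{theorem:mainWelding} from Theorem~\ref{theorem:mainWelding2}. So there is no ``paper's own proof'' to compare your proposal against; the author simply cites the result and, in Section~\ref{section:log_capacity}, restates the stronger version with the approximation clauses (ii)--(iii).

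As to your sketch itself: the broad strategy of approximating $h$ by quasisymmetric weldings $h_n$, normalizing, and passing to a kernel limit is in the right spirit, and you have correctly located the genuine difficulty in your items (a)--(c), particularly in ruling out a gap between $\Omega$ and $\Omega^{*}$ and in pushing the welding identity to the boundary. However, your description of the mechanism is not yet a proof: the sentence ``on the dyadic arcs making up $\Sone\setminus E$ the targets $h_n(I)$ lie in a set of zero capacity, which pins down the conformal geometry there'' is where all the work hides, and as written it is a hope rather than an estimate. In Bishop's actual argument the curve is built by an explicit iterative construction (interpolating between prescribed inner and outer conformal maps on nested annuli), with careful extremal-length and harmonic-measure bounds at each stage; the zero-capacity hypotheses on $E$ and $h(\Sone\setminus E)$ are what make those bounds summable. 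A naive limit of quasisymmetric weldings, without that quantitative scaffolding, can and does fail to produce a Jordan curve, exactly as in Oikawa's examples. So your outline identifies the right obstacles but does not yet supply the key lemma that overcomes them.
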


In light of Theorem \ref{Theorem:ChrisLogSingular}, we see that Theorem \ref{theorem:mainWelding} reduces to proving the following.
 
\begin{theorem}\label{theorem:mainWelding2}
Let $\phi\colon\Sone\to\Sone$ be a circle homeomorphism, then there exists a log-singular map $h\colon\Sone\to\Sone$ so that $\phi\circ h^{-1}$ is also log-singular. Thus $\phi = (\phi \circ h^{-1}) \circ  h$ is a composition of two conformal weldings.
\end{theorem}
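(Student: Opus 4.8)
The plan is to reduce Theorem~\ref{theorem:mainWelding2} to the construction of a single set with the right properties, and then to build that set by a multi-scale argument that carries $\phi$ along for the ride.

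\emph{The reduction.} Suppose $E\subset\Sone$ is a Borel set with $\capacity(E)=0$, that $\capacity(\phi(E))=0$, and that there is a circle homeomorphism $h$ with $\capacity(h(\Sone\setminus E))=0$ — that is, $E$ is a log-singular set in the sense of the abstract, with the extra feature that $\phi$ does not blow up its capacity. Then we are done: $h$ is log-singular with witness set $E$; and writing $g:=\phi\circ h^{-1}$ and $F:=h(\Sone\setminus E)$, we have $\capacity(F)=0$, while $\Sone\setminus F=h(E)$ since $h$ is a bijection of $\Sone$, so $g(\Sone\setminus F)=g(h(E))=\phi(E)$ has zero capacity; hence $g$ is log-singular with witness set $F$, and $\phi=g\circ h$. (Theorem~\ref{Theorem:ChrisLogSingular} then upgrades both factors to weldings.) So the whole content is: given $\phi$, produce a log-singular set $E$ with $\capacity(\phi(E))=0$.

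\emph{Log-singular sets and the tools needed.} First I would record a few facts about log-singular sets. They exist: any log-singular map $h_0$ comes with a zero-capacity set $E_0$ such that $\capacity(h_0(\Sone\setminus E_0))=0$, and $E_0$ is then log-singular by definition. They are stable: if $E$ is log-singular and $\capacity(Z)=0$, then $E\cup Z$ is log-singular, because it still has zero capacity and $\Sone\setminus(E\cup Z)\subset\Sone\setminus E$ is mapped to a zero-capacity set by the same $h$; also, the property ``some circle homeomorphism maps $A$ to a zero-capacity set'' depends only on the homeomorphism type of the inclusion $A\hookrightarrow\Sone$. The substantive tool, which I would isolate as a lemma, is a flexible \emph{construction} of log-singular sets along prescribed combinatorics: one builds two combinatorially identical nested sequences of finite arc systems $\mathcal{P}_0\prec\mathcal{P}_1\prec\cdots$ (in the source) and $\mathcal{Q}_0\prec\mathcal{Q}_1\prec\cdots$ (in the target), with meshes tending to $0$, designating inside each arc some children as ``$E$-arcs'' and the rest as ``complementary arcs'', so that $h$ is the limiting homeomorphism, $E$ is the set of points eventually trapped in $E$-arcs, and the capacities are controlled by the elementary estimate that a union of arcs, short enough relative to their number, has small logarithmic capacity (made quantitative through the behaviour of capacity under $z\mapsto z^{N}$): making the source $E$-arcs extremely short at every generation forces $\capacity(E)=0$, and making the target complementary arcs extremely short forces $\capacity(h(\Sone\setminus E))=0$. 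These two demands do not conflict, since they pin down disjoint families of arcs (short source $E$-arcs correspond to \emph{long} target $E$-arcs: $h$ inflates $E$). Note that $\capacity(E)=0$ and $\capacity(h(\Sone\setminus E))=0$ force $E$ to be dense and $\Sone\setminus E$ to be non-closed, which is the source of the main technical difficulty below.

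\emph{Carrying $\phi$ along, and the main obstacle.} Now run this construction, but at every generation, when choosing the new source $E$-arcs, choose them short enough that not only their own contribution to $\capacity(E)$ is negligible, but also — using that $\phi$ is a homeomorphism of a compact space, hence uniformly continuous — their $\phi$-images are short enough that the same capacity estimate applied to $\phi(E)$ gives $\capacity(\phi(E))=0$. Concretely: at generation $n$ fix the combinatorics first (in particular the number $N_n$ of new $E$-arcs), then pick the length scale $\ell_n$ small enough that both $\bigl(N_n\ell_n\bigr)^{1/N_n}$ and $\bigl(N_n\,\omega_\phi(\ell_n)\bigr)^{1/N_n}$ are as small as needed, where $\omega_\phi$ is the modulus of continuity of $\phi$; this is always possible because $\omega_\phi(\ell)\to0$ as $\ell\to0$, and it interferes neither with the source density requirement nor with the target estimate (a third, disjoint family of arcs). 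The limiting $h$ is an honest circle homeomorphism because both meshes go to $0$. I expect the real difficulty to be exactly the place where one needs Bishop's potential theory rather than the soft estimate above: verifying that the set forced into the role of $h(\Sone\setminus E)$ — necessarily a $\limsup$ of unions of arcs, since $E$ is dense — genuinely has zero capacity. Bounding it by the capacity of a tail of the $\limsup$ is hopelessly lossy (such a tail is essentially the complement of a fat Cantor set, and has capacity bounded away from $0$); instead one must argue that any probability measure carried by this set has infinite logarithmic energy, and it is this argument that dictates how fast the arc lengths must shrink relative to how fast their number grows.
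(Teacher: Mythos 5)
Your reduction is exactly the paper's: find a Borel $E$ with $\capacity(E)=\capacity(\phi(E))=0$ together with a homeomorphism $h$ with $\capacity(h(\Sone\setminus E))=0$, set $g=\phi\circ h^{-1}$, and observe that $F:=h(\Sone\setminus E)$ witnesses that $g$ is log-singular since $g(\Sone\setminus F)=\phi(E)$. The construction you sketch — two combinatorially identical nested systems of arcs, meshes tending to zero, $h$ defined as a uniform limit of piecewise-linear maps matching the combinatorics, with arc lengths at each generation chosen small enough that both $\capacity$ and $\capacity\circ\phi$ are controlled via uniform continuity of $\phi$ — is also precisely what the paper does (Lemma \ref{lemma:logsingular} and Lemma \ref{lemma:find_logsingular}).

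Where you go wrong is in the final paragraph, where you predict that verifying $\capacity(h(\Sone\setminus E))=0$ requires a genuine potential-theoretic argument (infinite logarithmic energy of every measure) and that the naive tail estimate is ``hopelessly lossy'' because a tail is ``essentially the complement of a fat Cantor set.'' That picture confuses the source side with the target side. In the paper's construction $E$ is the $\limsup$ of the odd-indexed children, which are made extremely short \emph{in the source}; consequently the even-indexed children are fat in the source, but $h$ reverses the roles: in the target the even-indexed arcs $J_{A_n(2j-2)}$ are made as short as one likes. Then $h(\Sone\setminus E)$ is a $\liminf$, i.e.\ a countable union over $m$ of the sets $F_m=\bigcap_{n\ge m}\bigl(\bigcup_{A_n}\bigcup_j J_{A_n(2j-2)}\bigr)$; each $F_m$ is contained in a single generation of target even arcs, whose total capacity is forced below $2^{-n}$ by shrinking those arcs (the only constraint on their lengths is that the mesh tends to zero, which leaves complete freedom to make them tiny), so each $F_m$ has zero capacity and countable subadditivity finishes the job. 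Even in your $\liminf$-for-$E$ parameterization, where $h(\Sone\setminus E)$ is a $\limsup$, the tail bound works fine: the tail $\bigcup_{n\ge m}(\text{target complementary arcs at level }n)$ is a countable union of \emph{short target} arcs whose level-$n$ capacities decay geometrically by construction, not the complement of a fat Cantor set. So the ``real difficulty'' you flag is not there, and since you leave it unresolved, your proposal has a genuine gap at precisely the step the paper handles with a soft application of Proposition \ref{proposition:capacity_properties}(4).
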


Observe that Theorem \ref{theorem:mainWelding2} follows immediately from Theorem \ref{Theorem:ChrisLogSingular} if the circle homeomorphism $\phi$ has enough regularity. For instance, biH\"older homeomorphisms preserve zero capacity sets. Therefore, if $\phi\colon\Sone\to\Sone$ is biHolder, then $\phi\circ h^{-1}$ is log-singular for any log-singular circle homeomorphism $h\colon\Sone\to\Sone$. Since $\phi=(\phi\circ h^{-1})\circ h$, and $\phi\circ h^{-1}$ and $h$ are weldings by Theorem \ref{Theorem:ChrisLogSingular}, every biH\"older circle homeomorphism is a composition of two weldings. The examples of non-welding homeomorphisms provided by Oikawa \cite[Example 1]{MR125956} are biH\"older, hence weldings are not closed under composition (Corollary \ref{corollary:composition}).
Similarly, any circle homeomorphism that preserves sets of zero capacity is also a composition of two (log-singular) weldings. The novel aspect of this paper is to prove that this holds for all (orientation-preserving) circle homeomorphisms.

Given any circle homeomorphism $\phi$, it suffices to build a log-singular homeomorphism $h$ and a set $E$ of zero capacity so that both $h(\Sone \setminus E)$ and $\phi(E)$ have zero capacity, for then $\phi\circ h^{-1}$ and $h$ are log-singular.  The set $E$ that we will construct will be a countable union of Cantor sets, and it will be described using an infinite tree whose vertices are labeled by finite strings of integers (which we call \textit{addresses}), much like a binary tree represents dyadic intervals. In our case, the number of children of a vertex increases in each generation, and, unlike the dyadic case, intervals in the same generation may have very different sizes.

By using addresses we provide a large class of zero capacity sets $E$, so that there exists a log-singular homeomorphism $h\colon\Sone\to\Sone$ with $h(\Sone\setminus E)$ of zero capacity, which we call a \textit{log-singular set}. In fact, any log-singular map admits as a log-singular set one of the zero capacity sets that we construct.

The paper is structured in the following way.

	\textbf{Section \ref{section:log_capacity}:} We introduce logarithmic capacity, quasisymmetric maps and flexible curves.
	
	\textbf{Section \ref{section:logsingular}:} We introduce log-singular sets and addresses, which encode partitions of a given interval or arc. We use addresses to build log-singular sets and log-singular maps.
	
	\textbf{Section \ref{section:finding}:} We prove that given a circle homeomorphism $\phi$ there is a log-singular set $E\subset\Sone$, given by an address, so that $\capacity(\phi(E))=0$. We also prove Theorem \ref{theorem:mainWelding2}.

 \subsection*{Acknowledgements} 

I would like to thank Chris Bishop for his continued guidance and support, and for reading (many) drafts of the paper; his comments, suggestions and remarks highly improved the presentation. I would like to thank  Mario Bonk, Steffen Rohde, Dennis Sullivan and Yilin Wang for their suggestions and encouraging conversations about the result. Malik Younsi informed me that Corollary \ref{corollary:composition} was first proved by Vainio. Spencer Cattalani has found many typos and grammatical mistakes.


\section{Preliminaries}\label{section:log_capacity}

In this section we summarize known results regarding (logarithmic) capacity that we will use in the following sections. These results are all contained in the books of Carleson, Garnett and Marshall, and Pommerenke \cite{CarlesonBook, HarmonicMeasure, PommerenkeBook}. We also recall the definition of quasisymmetric homeomorphisms and provide a summary of the main results obtained by Bishop in \cite{ChrisWeldingAnnals}.

\subsection{Logarithmic capacity}

Let $\mu$ be a finite compactly supported signed (Borel) measure. The \textit{logarithmic potential} of $\mu$ is the functional $$ U_{\mu}(z)=\int \log\frac{1}{|\xi-z|}d\mu(\xi).$$

By Fubini's theorem, $U_{\mu}(z)$ is finite a.e. with respect to Lebesgue measure. If $$ \iint\left| \log\frac{1}{|z-\xi|}\right| d|\mu|(\xi)d|\mu|(z)<\infty,$$ we say that $\mu$ has \textit{finite energy} and define the \textit{energy integral} $I(\mu)$ by
$$ I(\mu)=\iint \log\frac{1}{|z-\xi|} d\mu(\xi)d\mu(z)=\int U_{\mu}(z)d\mu(z).$$

Let $K$ be compact and denote by $P(K)$ the set of all Borel probability measures on $K$. Define the \textit{Robin's constant} of $K$ by 
$$ \gamma(K)=\inf\{I(\mu)\colon \mu\in P(K)\},$$
and the (logarithmic) \textit{capacity} of $K$ by 
$$ \capacity(K)=e^{-\gamma(K)}.$$
It follows that $\gamma(K)<\infty$ if and only if $\capacity(K)>0$, which holds as long as there is a probability $\mu\in P(K)$ with finite energy $I(\mu)<\infty$. If $K_{1}\subset K_{2}$, then any probability on $K_{1}$ is also a probability on $K_{2}$. Hence $\gamma(K_{1})\geq\gamma(K_{2})$ and $\capacity(K_{1})\leq\capacity(K_{2})$, i.e. the capacity set function is monotone with respect to set inclusion.

The \textit{capacity} of any Borel set $E$ is defined as 
$$\capacity(E)=\sup\{\capacity(K)\colon K \textrm{ compact}, K\subset E\}.$$

Capacity can be a difficult object to compute, but the capacities of some simple sets are know (see \cite[Chapter 3]{HarmonicMeasure} or \cite[Chapter 9]{PommerenkeBook}). For example, for $a,b\in\C$, $\capacity[a,b]=|a-b|/4$, and the capacity of a disk is its radius, i.e. $\capacity(\D(a,r))=r$.

We now summarize some other known properties of logarithmic capacity that we will need later.

\begin{proposition}\label{proposition:capacity_properties} Logarithmic capacity satisfies the following properties.
\begin{enumerate}
	\item If $\varphi$ is $L$-Lipschitz, then $\capacity(\varphi(E))\leq L\capacity(E)$.
	\item If $h(\xi)=c\xi+c_{0}+c_{1}\xi^{-1}+\cdots$ maps $\C_{\infty}\setminus\overline{\D}$ conformally onto $G$ then $\capacity(\partial G)=|c|$.
	\item Borel sets are capacitable, that is, given $E$ Borel and $\eps>0$, there is an open set $V$ with $E\subset V$ and $\capacity(V)<\capacity(E)+\eps$.
	\item If $E_{n}$ are Borel sets so that $E=\cup E_{n}$ has diameter less or equal than one, then $\capacity(E)\leq\sum\capacity(E_{n})$.
\end{enumerate}
\end{proposition}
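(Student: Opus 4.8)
The plan is to verify the four items in turn, all of which are classical and contained in \cite{CarlesonBook, HarmonicMeasure, PommerenkeBook}; we indicate the arguments. For (1), by inner regularity we may take $E=K$ compact, and then $\varphi(K)$ is compact; if $\capacity(\varphi(K))=0$ there is nothing to prove, so fix $\eps>0$ and pick $\nu\in P(\varphi(K))$ with $I(\nu)\le\gamma(\varphi(K))+\eps$. Pulling $\nu$ back through a Borel section $s\colon\varphi(K)\to K$ of $\varphi$ (so $\varphi\circ s=\mathrm{id}$) and setting $\mu:=s_{*}\nu\in P(K)$, the bound $|s(u)-s(v)|\ge L^{-1}|u-v|$ gives $\log\frac{1}{|s(u)-s(v)|}\le\log L+\log\frac{1}{|u-v|}$, hence $\gamma(K)\le I(\mu)\le\log L+I(\nu)\le\log L+\gamma(\varphi(K))+\eps$; letting $\eps\to 0$ and rearranging the Robin constants yields $\capacity(\varphi(K))\le L\,\capacity(K)$. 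When $\varphi$ is injective --- the only case used below --- no section is needed; alternatively one identifies $\capacity(K)$ with the transfinite diameter, for which the inequality is immediate from $\prod_{i<j}|\varphi(z_i)-\varphi(z_j)|\le L^{n(n-1)/2}\prod_{i<j}|z_i-z_j|$.

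For (2), conformal invariance of the Green's function gives $g_{G}(z,\infty)=g_{\D^{*}}(h^{-1}(z),\infty)=\log|h^{-1}(z)|$; since $h^{-1}(w)=w/c+O(1)$ near $\infty$, this equals $\log|z|-\log|c|+o(1)$ as $z\to\infty$, and comparison with the standard expansion $g_{G}(z,\infty)=\log|z|-\log\capacity(\C_{\infty}\setminus G)+o(1)$ gives $\capacity(\C_{\infty}\setminus G)=|c|$; since the equilibrium measure of the continuum $\C_{\infty}\setminus G$ is carried by $\partial G$, also $\capacity(\partial G)=|c|$. Statement (3) is Choquet's capacitability theorem: one checks that $E\mapsto\capacity(E)$ is a Choquet capacity --- monotone, continuous along increasing sequences of arbitrary sets, and continuous along decreasing sequences of compacts, the last via the classical fact $\capacity(K_{\delta})\downarrow\capacity(K)$ for the closed $\delta$-neighbourhoods $K_{\delta}$ of a compact $K$ --- whence every analytic, in particular every Borel, set $E$ satisfies $\capacity(E)=\inf\{\capacity(V)\colon V\supset E\text{ open}\}$.

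For (4), fix a compact $K\subseteq E=\bigcup_n E_n$ with $\capacity(K)>0$ and let $\mu$ be its equilibrium measure, so $\gamma(K)=I(\mu)$; disjointify by $F_1=E_1$, $F_n=E_n\setminus(E_1\cup\cdots\cup E_{n-1})$ and write $\mu=\sum_n\nu_n$ with $\nu_n=\mu|_{F_n}$, $b_n=\nu_n(\C)$, $\sum_n b_n=1$. Since $\diam E\le 1$ the kernel $\log\frac{1}{|x-y|}$ is nonnegative on $K\times K$, so every cross term in $I(\mu)=\sum_{m,n}\iint\log\frac{1}{|x-y|}\,d\nu_m\,d\nu_n$ is nonnegative and (after a routine inner-regularity reduction making $\nu_n/b_n$ a probability measure on a compact subset of $E_n$)
\[
\gamma(K)=I(\mu)\ \ge\ \sum_n I(\nu_n)\ =\ \sum_n b_n^{2}\,I(\nu_n/b_n)\ \ge\ \sum_n b_n^{2}\,\gamma(E_n).
\]
With $\gamma_n:=\gamma(E_n)=\log\frac{1}{\capacity(E_n)}>0$ (positive because $\diam E\le 1$), Cauchy--Schwarz gives $1=\big(\sum_n b_n\big)^{2}\le\big(\sum_n b_n^{2}\gamma_n\big)\big(\sum_n\gamma_n^{-1}\big)$, so $\gamma(K)\ge\big(\sum_n\gamma_n^{-1}\big)^{-1}$; as this is uniform over compact $K\subseteq E$ and $\gamma(E)=\inf_K\gamma(K)$, we obtain
\[
\frac{1}{\log\!\big(1/\capacity(E)\big)}\ \le\ \sum_{n}\frac{1}{\log\!\big(1/\capacity(E_n)\big)} .
\]
This is the correct reading of the subadditivity asserted in (4): with the normalization $\capacity=e^{-\gamma}$ used here the literal bound $\capacity(E)\le\sum_n\capacity(E_n)$ fails already for two intervals (e.g.\ $\capacity([0,\tfrac14]\cup[\tfrac34,1])=\tfrac{\sqrt3}{8}>\tfrac18$), and the statement must be understood with $\capacity$ replaced by $1/\log(1/\capacity)$; in particular a countable union of zero-capacity sets has zero capacity, which is all that Section~\ref{section:finding} uses. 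Item (4) is also the one genuinely delicate point: the logarithmic kernel is sign-indefinite, which forces both the hypothesis $\diam E\le 1$ (to make the kernel nonnegative on $K\times K$) and the reciprocal-Robin normalization, and the inner-regularity reduction making the pieces supported on compact subsets of the $E_n$ needs a short argument; by contrast, in (1) the sole mild subtlety is the measurable section, vacuous in every application in this paper, and (2)--(3) amount to bookkeeping over the cited references.
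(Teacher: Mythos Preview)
The paper does not actually prove this proposition: it simply refers to \cite[Chapter~9]{PommerenkeBook} for (1)--(3) and to \cite[Section~3, Lemma~4]{CarlesonBook} for (4). Your write-up therefore goes well beyond the paper's treatment, which is a bare citation.

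Your arguments for (1)--(3) are correct and standard. The transfinite-diameter route for (1) is the cleanest and sidesteps the measurable-section issue entirely; the Green's-function computation for (2) and the Choquet-capacity verification for (3) are exactly what one finds in the cited references.

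The substantive point is your treatment of (4), and here you have caught a genuine misstatement in the paper. The literal inequality $\capacity(\cup E_n)\le\sum\capacity(E_n)$ is false for $\capacity=e^{-\gamma}$, and your counterexample is valid: after translating $[0,\tfrac14]\cup[\tfrac34,1]$ to $[-\tfrac12,-\tfrac14]\cup[\tfrac14,\tfrac12]$ and using $\capacity([-a,-b]\cup[b,a])=\tfrac12\sqrt{a^2-b^2}$, one gets $\sqrt3/8>1/8$. What Carleson's Lemma~4 actually gives is subadditivity of $1/\gamma=1/\log(1/\capacity)$ under $\diam E\le1$, precisely the inequality you prove via the equilibrium-measure decomposition and Cauchy--Schwarz. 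You are also right that Section~\ref{section:finding} only needs the corollary that countable unions of zero-capacity sets have zero capacity; the displayed inequalities there that invoke ``sub-additivity of capacity'' with the literal $\capacity$ are strictly speaking incorrect as written, but the construction makes the intervals arbitrarily small, so one can just as well arrange $\sum 1/\log(1/\capacity(I_k))<2^{-n}$ and the conclusion survives. So you have both supplied a proof where the paper gives none and correctly diagnosed an error in the stated proposition.
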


The proof of (1), (2) and (3) can be found in \cite[Chapter 9]{PommerenkeBook},  (4) follows from Lemma 4 in \cite[Section 3]{CarlesonBook}.

We say that a circle homeomorphism $\phi\colon\Sone\to\Sone$ is $\alpha$-biHolder if there is a constant $C>0$ so that $$ \frac{1}{C}|z-w|^{1/\alpha}\leq|\phi(z)-\phi(w)|\leq C|z-w|^{\alpha}.$$

As it was already mentioned in the introduction, biHolder maps preserve sets of zero capacity.

\begin{lemma}
Let $\phi$ be $\alpha$-biHolder, then $\capacity(E)=0$ if and only if $\capacity(\phi(E))=0$.
\end{lemma}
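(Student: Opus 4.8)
The plan is to prove both implications at once by exploiting that the hypothesis is symmetric in $\phi$ and $\phi^{-1}$: from the lower bound $\tfrac{1}{C}|z-w|^{1/\alpha}\leq|\phi(z)-\phi(w)|$ one reads off $|\phi^{-1}(u)-\phi^{-1}(v)|\leq C^{\alpha}|u-v|^{\alpha}$, and symmetrically for the other inequality, so $\phi^{-1}$ is again $\alpha$-biHölder (with a possibly different constant, which plays no role). Hence it suffices to prove the single implication $\capacity(E)=0\Rightarrow\capacity(\phi(E))=0$ using only the upper Hölder estimate $|\phi(x)-\phi(y)|\leq C|x-y|^{\alpha}$; applying that implication to $\phi^{-1}$ gives the converse. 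Moreover, since the capacity of a Borel set is the supremum of the capacities of its compact subsets and $\phi$ is a homeomorphism, it is enough to show: if $L\subset\Sone$ is compact with $\capacity(L)=0$, then $\capacity(\phi(L))=0$ (and then apply this to $L=\phi^{-1}(K)$ for every compact $K\subset\phi(E)$).

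So I would fix such an $L$, let $\nu$ be an arbitrary Borel probability measure on the compact set $\phi(L)$, and transport it back to $L$ to compare energies. Put $\mu=(\phi^{-1})_{*}\nu$, a probability measure on $L$; then $\phi_{*}\mu=\nu$, hence $\nu\times\nu=(\phi\times\phi)_{*}(\mu\times\mu)$, and the change-of-variables formula gives
$$I(\nu)=\iint\log\frac{1}{|u-v|}\,d\nu(u)\,d\nu(v)=\iint_{L\times L}\log\frac{1}{|\phi(x)-\phi(y)|}\,d\mu(x)\,d\mu(y).$$
The Hölder bound yields the pointwise inequality $\log\frac{1}{|\phi(x)-\phi(y)|}\geq-\log C+\alpha\log\frac{1}{|x-y|}$ on $L\times L$, whose right-hand side is bounded below by the finite constant $-\log C-\alpha\log 2$ because $|x-y|\leq 2$ on $\Sone$. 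Integrating this inequality against the probability measure $\mu\times\mu$ is therefore legitimate and gives $I(\nu)\geq-\log C+\alpha\,I(\mu)$. Since $\capacity(L)=0$ is equivalent to $\gamma(L)=+\infty$, i.e.\ to the statement that every probability measure on $L$ has infinite energy, we have $I(\mu)=+\infty$, so $I(\nu)=+\infty$. As $\nu$ was arbitrary, $\gamma(\phi(L))=+\infty$, that is $\capacity(\phi(L))=0$.

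I do not expect a serious obstacle: this is essentially a one-line energy estimate once the pushforward is set up. The only points that need care are routine. First, $I(\mu)$ is literally defined only for measures of finite energy, but on the bounded set $\Sone$ the kernel $\log\frac{1}{|x-y|}$ is bounded below, so the energy integral is always a well-defined element of $(-\infty,+\infty]$ and $\capacity=0$ means it equals $+\infty$ for every competitor; with this convention the splitting of the integral above is valid. Second, the reduction to compact subsets and the identity $\nu\times\nu=(\phi\times\phi)_{*}(\mu\times\mu)$ are immediate from the continuity of $\phi$ and $\phi^{-1}$. Note that Proposition \ref{proposition:capacity_properties}(1) does not apply directly, since an $\alpha$-Hölder map need not be Lipschitz; the point is rather that an $\alpha$-Hölder perturbation changes the logarithmic kernel only by the factor $\alpha$ plus an additive constant, which is exactly what makes the class of zero-capacity subsets of $\Sone$ invariant under biHölder maps.
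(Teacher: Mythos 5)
Your proof is correct, and it is essentially the same argument the paper is pointing to: the paper delegates this lemma to ``a minor alteration in the proof of [Garnett--Marshall, Ch.\ 3, Thm.\ 4.5],'' and the standard alteration is exactly the energy-integral transport you carry out. You reduce to compact sets via the definition of capacity for Borel sets, push an arbitrary probability measure on $\phi(L)$ back to $L$, and use the upper H\"older bound to obtain $I(\nu)\geq-\log C+\alpha I(\mu)$; since every competitor on $L$ has infinite energy, so does every competitor on $\phi(L)$. The symmetry remark (that $\phi^{-1}$ is again $\alpha$-biH\"older) correctly reduces everything to the single implication, and your handling of the technical point that the kernel $\log\frac{1}{|x-y|}$ is bounded below on $\Sone\times\Sone$ --- so the energy integral is a well-defined element of $(-\infty,+\infty]$ and the splitting is legitimate --- is exactly the care that is needed. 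The observation that Proposition~\ref{proposition:capacity_properties}(1) does not apply, and that the mechanism is rather the multiplicative-plus-additive perturbation of the logarithmic kernel under H\"older maps, is the right way to see why this works. No gaps.
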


This follows by making a minor alteration in the proof of \cite[Chapter 3, Theorem 4.5]{HarmonicMeasure}.

\subsection{Quasisymmetric homeomorphisms}

We say that a circle homeomorphism $\phi\colon\Sone\to\Sone$ is quasisymmetric if there is a constant $M<\infty$ so that $$ M^{-1}\leq |\phi(I)|/|\phi(J)|\leq M,$$ whenever $I,J\subset\Sone$ are two adjacent arcs of equal length (see \cite{AhlforsQC,AstalaBook,LehtoVirtanen} for more information regarding quasisymmetric homeomorphisms).

Pfluger proved that quasisymmetric homeomorphisms are weldings.

\begin{theorem}[Pfluger \cite{Pfluger}]
Every quasisymmetric circle homeomorphism is a conformal welding.
\end{theorem}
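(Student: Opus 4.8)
The plan is to combine the Beurling--Ahlfors extension theorem with the measurable Riemann mapping theorem, which is the classical route to Pfluger's theorem. First I would invoke Beurling--Ahlfors: a quasisymmetric homeomorphism of $\Sone$ extends to a quasiconformal homeomorphism of the plane. Concretely, conjugating the standard half-plane statement by a M\"obius transformation, $\phi$ extends to a quasiconformal self-homeomorphism $\Phi$ of $\D^{*}=\C_{\infty}\setminus\overline{\D}$ with $\Phi|_{\Sone}=\phi$. Let $\mu=\overline\partial\Phi/\partial\Phi$ be its Beltrami coefficient, so that $\|\mu\|_{\infty}<1$.

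Next I would glue conformal structures. Define a Beltrami coefficient on the whole sphere by $\widetilde\mu=\mu$ on $\D^{*}$ and $\widetilde\mu=0$ on $\D$; then $\|\widetilde\mu\|_{\infty}<1$. By the measurable Riemann mapping theorem there is a quasiconformal homeomorphism $F\colon\C_{\infty}\to\C_{\infty}$ with Beltrami coefficient $\widetilde\mu$, say normalized to fix $0,1,\infty$. Since $\widetilde\mu=0$ on $\D$, the restriction $f:=F|_{\D}$ is conformal onto $\Omega:=F(\D)$; since $F$ and $\Phi$ carry the same Beltrami coefficient on $\D^{*}$, the composition $g:=(F\circ\Phi^{-1})|_{\D^{*}}$ is conformal onto $\Omega^{*}:=F(\D^{*})$. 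Because $F$ is a global homeomorphism, $\gamma:=F(\Sone)$ is a Jordan curve (indeed a quasicircle) with complementary components $\Omega$ and $\Omega^{*}$, and $f$, $g$ extend to homeomorphisms of the closed disks onto $\overline{\Omega}$, $\overline{\Omega^{*}}$ (directly from $F$ and $\Phi$ being homeomorphisms, or by the Carath\'eodory--Torhorst theorem).

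It remains to compute the welding $g^{-1}\circ f$ of $\gamma$. On $\Sone$ we have $f=F$, and since $\Phi|_{\Sone}=\phi$ also $g=F\circ\phi^{-1}$, whence
$$
g^{-1}\circ f\,\big|_{\Sone}=\bigl(F\circ\phi^{-1}\bigr)^{-1}\circ F=\phi\circ F^{-1}\circ F=\phi .
$$
Thus $\phi$ is exactly the conformal welding determined by the Jordan curve $\gamma$, which is the assertion of the theorem.

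The genuine content lies entirely in the two imported results. The Beurling--Ahlfors theorem is what upgrades the one-dimensional metric regularity of a quasisymmetric map into a two-dimensional quasiconformal extension, so that it even has a Beltrami coefficient; and the measurable Riemann mapping theorem is what produces the quasiconformal solution $F$ of $\overline\partial F=\widetilde\mu\,\partial F$. This is where the real analytic difficulty sits, and it is precisely the ingredient that is unavailable for general (non-quasisymmetric) circle homeomorphisms --- which is the whole reason the main theorem of the paper must instead route through log-singular maps and Theorem \ref{Theorem:ChrisLogSingular}. Everything else here --- extending $\Phi$, gluing the Beltrami coefficients across $\Sone$, checking boundary values, and identifying $g^{-1}\circ f$ --- is bookkeeping. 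An essentially equivalent formulation of the same argument is to transport the standard complex structure of $\D^{*}$ to $\D$ via $\phi$ across $\Sone$, obtaining a measurable conformal structure on $\C_{\infty}$, uniformize it by the measurable Riemann mapping theorem, and read the welding off the uniformizing map.
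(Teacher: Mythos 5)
Your argument is correct and is precisely the classical Pfluger proof that the paper alludes to in its introduction (``first proven by Pfluger\ldots by using the measurable Riemann mapping theorem''); the paper itself cites Pfluger without reproducing the proof. The chain Beurling--Ahlfors extension $\to$ glue Beltrami coefficients across $\Sone$ $\to$ solve the Beltrami equation $\to$ read off $g^{-1}\circ f=\phi$ is exactly the standard route, and your computation of the welding on the boundary is correct.
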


By \cite[Corollary 3.10.3]{AstalaBook}, every quasisymmetric circle homeomorphism is $\alpha$-biHolder for some $\alpha>0$. Therefore quasisymmetric circle homeomorphisms preserve sets of zero capacity.

\subsection{Flexible curves and log-singular homeomorphisms}

In \cite{MR1274085} Bishop introduced and proved the existence of flexible curves. We say that a Jordan curve $\gamma\subset\C$ is \textit{flexible} if the following conditions are satisfied:
\begin{enumerate}[label=(\alph*)]
	\item Given any other Jordan curve $\tilde{\gamma}$ and any $\eps>0$, there exists a homeomorphism $\phi\colon\C_{\infty}\to\C_{\infty}$, conformal off $\gamma$, so that the Hausdorff distance between $\phi(\gamma)$ and $\tilde{\gamma}$ is less than $\epsilon$.
	\item Given $z_{1},z_{2}$ in each component of $\C_{\infty}\setminus\gamma$ and $w_{1},w_{2}$ in each component of $\C_\infty\setminus\tilde{\gamma}$, the previous $\phi$ can be taken so that $\phi(z_{j})=w_{j}$ for $j=1,2$.
\end{enumerate}

Flexible curves are conformally non-removable in a very strong sense. Later in \cite{ChrisWeldingAnnals} Bishop proved that $h\colon\Sone\to\Sone$ is the conformal welding of a flexible curve if and only if $h$ is log-singular. The specific result that he proved is the following, which implies Theorem \ref{Theorem:ChrisLogSingular}.

\begin{theorem}[\cite{ChrisWeldingAnnals} Theorem 3]
Let $h\colon\Sone\to\Sone$ be a log-singular homeomorphism with $h(1)=1$, and let $f_{0}$ and $g_{0}$ be two conformal maps of $\D$ and $\C_{\infty}\setminus\overline{\D}$ respectively onto disjoint domains. Then, for any $0<r<1$ and any $\eps>0$, there are conformal maps $f$ and $g$ of $\D$ and $\C_{\infty}\setminus\overline{\D}$ onto the two complementary components of a Jordan curve $\gamma$ that satisfy:
\begin{enumerate}[label=(\roman*)]
	\item $h=g^{-1}\circ f$ on $\Sone$, i.e. $h$ is a conformal welding.
	\item $|f(z)-f_{0}(z)|<\eps$ for all $|z|\leq r$.
	\item $|g(z)-g_{0}(z)|<\eps$ for all $|z|\geq 1/r$.
\end{enumerate}
Moreover, the curve $\gamma$ can be taken to have zero area.
\end{theorem}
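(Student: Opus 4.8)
The statement is Bishop's Theorem 3 of \cite{ChrisWeldingAnnals} (the version recorded above as Theorem \ref{Theorem:ChrisLogSingular} is the special case obtained by discarding the approximation and zero-area clauses), so the plan is to follow his strategy. Gluing $\overline{\D}$ to $\overline{\D^{*}}$ along $h$ and hoping to uniformize is useless here, since a log-singular $h$ is as far from quasisymmetric as a homeomorphism can be; instead one constructs the curve $\gamma$ \emph{directly}, as the limit of an explicit iterative construction driven by the log-singular structure of $h$, arranging that the accumulated area, the accumulated deviation of the conformal maps from $f_{0}$ and $g_{0}$, and the accumulated discrepancy between the resulting welding and $h$ are all controlled by the (small, then rapidly shrinking) capacities of certain ``bad'' sets. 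For the setup: post-composing with a rotation gives $h(1)=1$, and after a M\"obius normalization we may take the disjoint domains $\Omega_{0}=f_{0}(\D)$ and $\Omega_{0}^{*}=g_{0}(\D^{*})$ to be separated by a round annular gap $\mathcal{A}$ into which $\gamma$ will be inserted, with every modification confined near the two boundary circles of $\mathcal{A}$ --- which is what makes conclusions (ii) and (iii) follow. Fixing a Borel $E\subset\Sone$ with $\capacity(E)=\capacity(h(\Sone\setminus E))=0$, for any prescribed $\delta>0$ capacitability (Proposition \ref{proposition:capacity_properties}(3)) and monotonicity supply open neighborhoods $U\supset E$ in $\partial\D$ and $V\supset h(\Sone\setminus E)$ in $\partial\D^{*}$, each a union of arcs, with $\capacity(U)<\delta$, $\capacity(V)<\delta$ and (since $E\subset U$ forces $h(\Sone\setminus U)\subset V$) $\Sone=U\cup h^{-1}(V)$; on the complementary closed ``good'' arcs $h$ has bounded length distortion.

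The iteration would then produce Jordan curves $\gamma_{n}$, with conformal maps $f_{n}\colon\D\to\mathrm{int}(\gamma_{n})$ and $g_{n}\colon\D^{*}\to\mathrm{ext}(\gamma_{n})$, by refining along a tree of arcs (the same sort of combinatorial bookkeeping the present paper's addresses will encode): over a good arc the curve and both maps are left essentially unchanged, while over an arc inside $U$ or inside $h^{-1}(V)$ the curve is replaced by a finely oscillating arc chosen so that the conformal maps of the two sides can realize on the boundary the strong length distortion $h$ has there. The analytic engine is a pair of estimates, to be proved by extremal length and harmonic measure: first, that altering $\gamma_{n-1}$ only near a subarc of capacity $<\delta_{n}$ moves $f_{n-1}$ on $\{|z|\le r\}$ and $g_{n-1}$ on $\{|z|\ge 1/r\}$ by at most $C(r)\delta_{n}$; second, that the material added at stage $n$ encloses total area $\lesssim\delta_{n}$ and lies in nested neighborhoods of the bad sets whose diameters shrink to $0$. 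Taking $\delta_{n}\to 0$ fast makes all three accumulated errors summable.

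Passing to the limit: by normal families the uniformly bounded $f_{n}$ and $g_{n}$ converge locally uniformly to conformal maps $f$ and $g$, $\gamma_{n}\to\gamma$ in the Hausdorff metric, conclusions (ii) and (iii) come from the first estimate, and $\mathrm{area}(\gamma)=0$ from the second. One checks that $\gamma$ is a Jordan curve --- injectivity of $f$ and $g$ on the closed disks persists and no self-touching is created --- using that the arcs introduced at each stage collapse to points along the bad sets. Finally the welding relation $g^{-1}\circ f=h$ holds on the good part of $\Sone$ at every finite stage, hence off a set of Lebesgue measure zero; since $g^{-1}\circ f$ and $h$ are both monotone circle homeomorphisms agreeing off a null set, they agree everywhere, which is conclusion (i).

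The main obstacle is this last paragraph. Keeping the limit a genuine Jordan curve is delicate, and forcing its welding to be \emph{exactly} $h$ --- not merely $h$ off a small set --- is harder still: both require controlling the boundary behavior of $f$ and $g$ over the zero-capacity sets $E$ and $h(\Sone\setminus E)$, precisely where $h$ is most singular. The quantitative fact one needs is that the images of the bad arcs under $f$ and $g$ collapse to points, after which monotonicity pins the boundary correspondence down on all of $\Sone$; obtaining the estimates behind this with constants that survive the infinitely many stages of the construction is the technical heart of \cite{ChrisWeldingAnnals}.
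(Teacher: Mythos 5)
This theorem is stated in the paper only as a citation of Bishop's Theorem~3 from \cite{ChrisWeldingAnnals}; the paper supplies no proof, so there is no in-paper argument to compare your sketch against. Judged on its own terms, what you have written is a faithful high-level narration of the strategy behind Bishop's proof (iterative modification of a Jordan curve confined to neighborhoods of zero-capacity bad sets, with capacity-controlled perturbation estimates for the Riemann maps, normal-families limit, and monotonicity to upgrade ``welding a.e.'' to welding everywhere). But you explicitly defer the two analytic estimates that carry the whole argument --- the bound on how much a conformal map moves when the domain is altered near a small-capacity boundary arc, and the collapse of the bad arcs' images to points so that the limit curve is Jordan and the welding is exactly $h$ --- and you are candid that these are ``the technical heart.'' Without them this is a plan, not a proof.

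One concrete inaccuracy worth flagging: you assert that on the closed arcs complementary to the bad neighborhoods, $h$ has ``bounded length distortion.'' A homeomorphism restricted to a closed arc is uniformly continuous, with a modulus of continuity, but nothing forces a Lipschitz-type two-sided length bound; for a log-singular $h$ the distortion on the good arcs can still be arbitrarily bad at small scales. The correct observation is simply that the bad behavior of $h$ is concentrated on $U\cup h^{-1}(V)$, which covers $\Sone$ and can be taken to have arbitrarily small capacity on both sides; the iteration exploits smallness of capacity, not any Lipschitz control on the good part. Also, since $U$ and $V$ are merely open, they are countable unions of arcs, and the inner-regularity step needed to pass to finitely many arcs at each stage (so the modification at stage $n$ is a finite combinatorial move) is another point that should not be elided.
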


In \cite{Burkart:JordanCurve} Burkart constructs a flexible curve that cannot be crossed by a rectifiable arc on a set of zero length. In \cite{PughWu} Pugh and Wu construct some special Jordan curves that arise as funnel sections of some ODEs. Those curves are flexible, as it is explained in \cite{Burkart:JordanCurve}.

Theorem \ref{theorem:mainWelding} (and Theorem \ref{theorem:mainWelding2}), that we prove in this paper, shows that any circle homeomorphism can be written as a composition of two weldings. In \cite{ChrisWeldingAnnals} Bishop proved that every circle homeomorphism is a welding on a set of nearly full measure.

\begin{theorem}[\cite{ChrisWeldingAnnals} Theorem 1]
Given any orientation-preserving homeomorphism $\phi\colon\Sone\to\Sone$ and $\eps>0$, there are a set $E\subset\Sone$ with $|E|+|h(E)|<\eps$ and a conformal welding homeomorphism $h\colon\Sone\to\Sone$ such that $\phi(x)=h(x)$ for all $x\in\Sone\setminus E$.
\end{theorem}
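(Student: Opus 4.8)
Since any such $h$ must agree with $\phi$ on a set $F$ with $|F|>1-\eps$ and $|\phi(F)|>1-\eps$, it has to reproduce the measure-theoretic behaviour of $\phi$ on $F$, and hence no single well-behaved class of weldings can serve: if $\phi$ is singular then $h$ cannot be quasisymmetric (at density points of $F$ one gets $h'=\phi'=0$, forcing $|h(F)|=0$), while if $\phi$ is the identity then $h$ equals the identity off a set of measure $<\eps$ and so cannot be log-singular. The plan is therefore a direct surgical construction: build the Jordan curve $\gamma$ over a $\phi$-adapted partition of $\Sone$ so that its welding equals $\phi$ on a large closed set $F$, using the freedom to redefine $\phi$ on the small complement $E=\Sone\setminus F$ to repair the assembly, with Bishop's flexible-curve machinery (the quantitative localized welding statement displayed above) supplying the local flexibility. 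One could also attempt an analytic route: extend $\phi$ to a self-homeomorphism of $\overline{\D}$, make it as quasiconformal as possible so that its Beltrami coefficient degenerates only near a small arc-set of $\Sone$, and solve the resulting degenerate Beltrami problems inside and outside, the degenerate boundary set becoming $E$.

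First I would set up a scale-by-scale stopping-time decomposition of $\Sone$: starting from $\Sone$, recursively split each arc $I$ into children that are $\phi$-preimages of a bisection of $\phi(I)$, and stop an arc $I$ once $\phi\vert_I$, after rescaling $I$ and $\phi(I)$ to unit length, is \emph{realizable} — for instance quasisymmetric, or log-singular in the sense of Theorem~\ref{Theorem:ChrisLogSingular}, so that $\phi\vert_I$ is the restriction of the welding of an explicit curve arc — or once it acquires uncontrollably large distortion. Let $E$ be the union of the arcs stopped for large distortion together with the residual set lying inside no stopped arc; it is essential that ``realizable'' be generous enough to absorb \emph{structured} wildness (a singular $\phi$ is wild at every scale yet is itself a welding, so for it $E$ must be empty). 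A covering/maximal-function estimate on the domain circle bounds the total length of the expansion-type bad arcs; the symmetric estimate on the range circle bounds the total $\phi$-image length of the contraction-type bad arcs; tuning the distortion threshold and the stopping depth in terms of $\eps$ makes both sums $<\eps/2$, so $|E|+|\phi(E)|<\eps$. Put $h=\phi$ on $F=\Sone\setminus E$.

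Next I would assemble $\gamma$ inductively over the generations: over each maximal realizable arc insert the curve arc whose welding is exactly $\phi$ there, while over each bad arc let $h$ be the affine map onto the corresponding image arc, realized by a tame curve arc. The quantitative localization — one may prescribe the welding on a subarc while the conformal maps change by less than $\eps_k$ off a fixed compact set, with $\sum_k\eps_k<\infty$ — guarantees that each new surgery leaves the already-built welding values on $F$ intact and that the nested curves converge to a Jordan curve $\gamma$ (of zero area) whose conformal welding is exactly $h$. Since $h$ is a welding, $\phi=h$ off $E$, and $|E|+|h(E)|=|E|+|\phi(E)|<\eps$, the theorem follows.

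The main obstacle is the decomposition. Because $\phi$ (and then automatically $\phi^{-1}$) may be singular, so that near almost every point $\phi$ oscillates between strong expansion and strong contraction, one cannot simply discard an ``infinitesimally wild'' set; one must isolate precisely the part of $\phi$ that stays non-realizable even when the surrounding curve is left free to compensate, and prove that this part is small simultaneously on the domain and on the range — i.e.\ that the two one-sided maximal-function estimates can be made to close at once with a threshold that still leaves the good pieces genuinely realizable. A secondary difficulty is keeping $\gamma$ an honest Jordan curve, neither pinched nor space-filling, through infinitely many surgeries, which is exactly what the quantitative flexible-curve estimates are designed to control.
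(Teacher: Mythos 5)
The paper does not prove this theorem: it is quoted verbatim from Bishop's Annals paper \cite{ChrisWeldingAnnals} as motivation, so there is no ``paper's own proof'' to compare against. Evaluating your proposal on its own merits, the negative observations at the start are sound and genuinely relevant --- $h$ cannot be quasisymmetric when $\phi$ is singular (since $h=\phi$ on a set $F$ of nearly full measure forces $h$ to be singular as well), and $h$ cannot be log-singular when $\phi$ is the identity (since then $h$ would map a positive-measure subset of $\Sone\setminus E$ onto itself, contradicting log-singularity). These rule out the two naive ``just use one nice class'' strategies and correctly indicate that the construction must be adaptive.

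The positive part, however, has a gap that you yourself identify in the final paragraph and then do not close. The stopping rule ``stop when $\phi\vert_I$, suitably rescaled, is realizable (quasisymmetric, or log-singular)'' is either vacuous or circular: a generic singular homeomorphism need not be quasisymmetric or log-singular on \emph{any} subarc, and you acknowledge that for such $\phi$ the bad set must nonetheless come out empty, which means ``realizable'' would have to mean ``restriction of a welding'' --- but that is precisely what is being constructed. Consequently there is no concrete maximal-function or covering estimate to run; the quantities your Calder\'on--Zygmund-type argument would control (distortion, doubling) are exactly the quantities that are allowed to be unbounded. The assembly step is also not a consequence of Bishop's quantitative flexible-curve lemma as stated: that lemma lets one approximate a given pair $(f_0,g_0)$ while prescribing the welding \emph{globally} to be a log-singular map, but it does not say that one may prescribe the welding exactly on a subarc while keeping it fixed elsewhere, nor that affine maps on the bad arcs are realized by ``tame curve arcs'' compatible with the already-built conformal structure (weldings do not localize to arcs in this way). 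So the proposal is an honest sketch of a strategy with a clearly flagged hole, not a proof; the hole --- identifying and controlling the ``irreparably non-realizable'' part of $\phi$ simultaneously on domain and range --- is exactly where the real work of Bishop's theorem lies, and his actual argument (built around Koebe's uniformization and flexible-curve surgery, not a stopping-time decomposition into quasisymmetric/log-singular pieces) is structured quite differently from what you outline.
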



\section{Log-singular sets and addresses}\label{section:logsingular}

In this section, we give sufficient conditions for a zero capacity set $E$ to admit a log-singular homeomorphism $h$ satisfying $\capacity(h(\Sone\setminus E))=0$, which we call a log-singular set. The approach is based on what we call \textit{addresses}, which encode partitions of a fixed arc of $\Sone$ (or an interval) by sub-arcs (or sub-intervals) in an iterative way.

We proceed to define addresses. It suffices to do so for $I=[0,1]$, since the definition will work for any other interval or sub-arc of $\Sone$ in an analogous way.

Let $\{L_{n}\}_{n\geq0}$ be a sequence of positive integers with $L_{0}=1$. We say that $A_{n}=\{a_{1}, a_{2}, \ldots, a_{n}\}$, where $0\leq a_{j}<L_{j}$ for $1\leq j\leq n$, is a word of length $n$. Suppose that for $n\geq1$ we have have intervals $I_{A_{n}}\subset[0,1]$ that satisfy:
\begin{enumerate}[label=(\roman*)]
	\item If $n\leq m$ and $A_{n}=\{a_{1}, a_{2}, \ldots, a_{n}\}$, $\tilde{A}_{m}=\{\tilde{a}_{1},\ldots, \tilde{a}_{n},\ldots,\tilde{a}_{m}\}$, then $I_{\tilde{A}_{m}}\subset I_{A_{n}}$ if and only if $a_{j}=\tilde{a}_{j}$ for $1\leq j\leq n$ (in such a case, $A_{n}$ is often called the prefix of $\tilde{A}_{m}$).
	\item If $A_{n}\not=\tilde{A}_{n}$, then $I_{A_{n}}$ and $I_{\tilde{A}_{n}}$ have disjoint interiors.
	\item $L_{n}$ denotes the number of intervals in which each $I_{A_{n-1}}$ has been divided, which we assume is the same for all $I_{A_{n-1}}$.
	\item For each $n$, the union over all possible $I_{A_{n}}$ is $I$. Moreover, if we fix a word $A_{n-1}=\{a_{1}, \ldots, a_{n-1}\}$, then the union over all possible $A_{n}=\{a_{1},\ldots, a_{n-1}, a\}$, with $0\leq a< L_{n}$, is $I_{A_{n-1}}$, i.e. $$ I_{A_{n-1}}=\bigcup_{0\leq a<L_{n}} I_{\displaystyle\{a_{1},\ldots, a_{n-1}, a\}}.$$
\end{enumerate}

We will say that $A_{n}$ is the \textit{address} of $I_{A_{n}}$; $A_{n}$ represents the directions that we need to take in the decomposition to reach $I_{A_{n}}$, in the sense that if we fix $A_{n}=\{a_{1},\ldots,a_{n}\}$, then 
\begin{equation}\label{equation:1} 
I_{A_{n}}=\bigcap_{j=1}^{n}I_{\{a_{1},\ldots, a_{j}\}}.
\end{equation}

\begin{figure}[h] 
\begin{center}
\begin{tikzcd}
                    &                                                   & I \arrow[d,thick] \arrow[lld,dashed] \arrow[rrd,dashed]                               &                                                           &                 \\
I_{\{0\}}               &                                                   & I_{\{1\}} \arrow[rrd,dashed] \arrow[rd,thick] \arrow[d,dashed] \arrow[ld,dashed] \arrow[lld,dashed] &                                                           & I_{\{2\}}           \\
{I_{\{1,0\}}}       & {I_{\{1,1\}}}                                     & {I_{\{1,2\}}}                                                     & {I_{\{1,3\}}} \arrow[lld,thick] \arrow[ld,dashed] \arrow[d,dashed] \arrow[rd,dashed] & {I_{\{1,4\}}}   \\
                    & {I_{\{1,3,0\}}} \arrow[ld,dashed] \arrow[d,thick]              & {I_{\{1,3,1\}}}                                                   & {I_{\{1,3,2\}}}                                           & {I_{\{1,3,3\}}} \\
{I_{\{1,3,0,0\}}}   & {I_{\{1,3,0,1\}}} \arrow[ld,dashed] \arrow[d,dashed] \arrow[rd,thick] &                                                                   &                                                           &                 \\
{I_{\{1,3,0,1,0\}}} & {I_{\{1,3,0,1,1\}}}                               & {I_{\{1,3,0,1,2\}}}                                               &                                                           &                
\end{tikzcd}
\end{center}
\caption{Addresses associated to a partition of an interval $I$ with $L_{1}=3, L_{2}=5, L_{3}=4, L_{4}=2, L_{5}=3$. The address to $I_{\{1,3,0,1,2\}}$ is highlighted in a thicker color and by non-dashed arrows.}
\label{figure:explanation_addresses}
\end{figure}
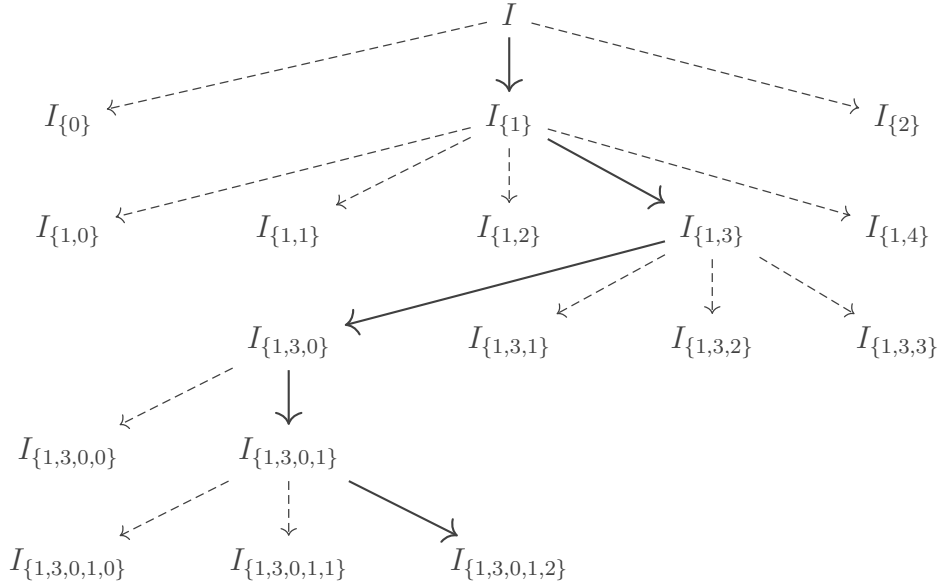

This is illustrated in Figure \ref{figure:explanation_addresses}. If $A_{n-1}=\{a_{1},\ldots, a_{n-1}\}$ we will also set, for $0\leq a< L_{n}$, 
\begin{equation}\label{equation:2}
A_{n-1}(a)=\{a_{1},\ldots, a_{n-1}, a\}
\end{equation} 
to denote that we extend the word $A_{n-1}$ by $a$.

Similarly, given an infinite sequence $\{a_{j}\}_{j\in\N}$, with $0\leq a_{j}<L_{j}$ for each $j\in\N$, we denote $A_{\infty}=\{a_{1},a_{2},\ldots\}$. Observe that given $A_{\infty}=\{a_{1},a_{2},\ldots\}$, then 
\begin{equation}\label{equation:3}
I_{A_{\infty}}=\bigcap_{n\geq1} I_{\{a_{1},\ldots,a_{n}\}}
\end{equation}
is either an interval or a point (since the lengths of the intervals decrease with $n$). If for any $\eps>0$, there exists $N\geq1$ so that for every $n\geq N$ we have $|I_{A_{n}}|<\eps$, then $I_{A_{\infty}}$ is a singleton. 

We will denote by $\mathcal{A}_{n}$ the collection of all possible words $A_{n}$ of length $n$, and by $\mathcal{A}_{\infty}$ the collection of all possible words $A_{\infty}$.

Well-known examples of such partitions include the usual binary or decimal representations of real numbers. For example, the dyadic partition of $[0,1]$ is given, in terms of addresses, by taking the sequence $\{L_{n}\}$ of positive integers with $L_{0}=1$ and $L_{n}=2$ for $n\geq1$. Each interval $I_{A_{n}}$ is split into two intervals of equal length (observe that the length of each $I_{A_{n}}$ is $2^{-n}$). We define the \textit{dyadic} partition of an interval $I=[a,b]$ as the partition given by the collection of intervals $\{I_{A_{n}}\}$ so that fixed $n$,
$$ I_{A_{n}}\subset \left\{ \left[a+\frac{j}{2^{n}},a+(b-a)\frac{j+1}{2^{n}}\right]\colon 0\leq j<2^{n} \right\}.$$

We recall the definition of log-singular homeomorphisms and log-singular sets.

\begin{definition}[Log-singular homeomorphism, log-singular set]
Let $I,J$ be two subarcs of $\Sone$. An orientation-preserving homeomorphism $h\colon I\to J$ is \textit{log-singular} if there exists a Borel set $E\subset I$ such that both $E$ and $h(I\setminus E)$ have zero logarithmic capacity. We say that a Borel set $E\subset\Sone$ is a \textit{log-singular set} if $\capacity(E)=0$ and there exists a log-singular homeomorphism $h\colon\Sone\to\Sone$ so that $\capacity(h(\Sone\setminus E))=0$.
\end{definition}

Observe that if $h\colon\Sone\to\Sone$ is a log-singular map with log-singular set $E\subset\Sone$, then $h(\Sone\setminus E)$ is also a log-singular set.

We can now prove that log-singular sets and log-singular maps exist. We will do so by using addresses, provided that we can obtain a dense capacity zero set.

\begin{lemma}[Log-singular sets and addresses]\label{lemma:logsingular}
Let $I,J$ be two subarcs of $\Sone$, and let $\{L_{n}\}$ be a sequence of positive integers so that $L_{0}=1$ and $L_{n}$, for $n\geq1$, is an even number. Consider words $A_{n}=\{a_{1},a_{2},\ldots, a_{n}\}$ with $0\leq a_{j}< L_{j}$. Suppose that $\{I_{A_{n}}\}$ is a partition of $I=I_{A_{0}}$ and $\{J_{A_{n}}\}$ is a partition of $J=J_{A_{0}}$ satisfying that given $\eps>0$, there exists $N\geq1$ such that for every $n\geq N$, $|I_{A_{n}}|+|J_{A_{n}}|<\eps$. Using the notation in (\ref{equation:1}) and (\ref{equation:2}), define $$ E=\bigcap_{m\in\N}\bigcup_{n\geq m} \left(\bigcup_{A_{n}\in\mathcal{A}_{n}}\bigcup_{j=1}^{L_{n+1}/2} I_{A_{n}(2j-1)}\right)$$ and $$ F=\bigcup_{m\in\N}\bigcap_{n\geq m}\left(\bigcup_{A_{n}\in\mathcal{A}_{n}}\bigcup_{j=1}^{L_{n+1}/2}J_{A_{n}(2j-2)}\right).$$
If $\capacity(E)=0$ and $\capacity(F)=0$, then $E$ and $F$ are log-singular sets and there exists a log-singular map $h\colon I\to J$ so that $h(I \setminus E)=F$.
\end{lemma}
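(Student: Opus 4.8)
The plan is to construct $h$ one generation at a time, using the address structure to match the pieces of the two partitions. At level $n$ we will have a bijection between the words $\mathcal{A}_n$ for the $I$-partition and $\mathcal{A}_n$ for the $J$-partition (they are indexed by the same set, since the branching numbers $L_n$ agree), and we define $h$ to map $I_{A_n}$ onto $J_{A_n}$ for every $A_n$. Concretely: on $I = I_{A_0}$ set $h$ to be the linear (or affine) orientation-preserving map onto $J = J_{A_0}$ at the coarsest stage, but then refine — having decided $h(I_{A_n}) = J_{A_n}$, extend the definition on the next generation by sending each child $I_{A_n(a)}$ affinely onto $J_{A_n(a)}$, respecting the left-to-right order so the map stays orientation-preserving and the pieces glue continuously at shared endpoints (condition (iv) guarantees the children of $I_{A_n}$ tile it in order, and likewise for $J$). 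Since the mesh of both partitions tends to $0$ by the hypothesis $|I_{A_n}| + |J_{A_n}| < \eps$ for $n \ge N$, these affine pieces are consistent across generations and define a genuine homeomorphism $h\colon I \to J$ on the union of all the $I_{A_n}$'s — but that union need not be all of $I$, so one must extend $h$ across the (countably many) ``limit points'' $I_{A_\infty}$ that are singletons; continuity and monotonicity force a unique value there, and on any limiting interval $I_{A_\infty}$ (if the mesh does not shrink along that branch — but the mesh hypothesis rules this out for cofinitely many generations, so every $I_{A_\infty}$ is a point) there is nothing to do. Thus $h$ is a well-defined orientation-preserving homeomorphism of $I$ onto $J$ with $h(I_{A_n}) = J_{A_n}$ for all finite words.

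Next I would verify $h(E) = F$, which is where the combinatorics of the two index sets in the definitions of $E$ and $F$ must be reconciled. Note $E$ is a $\limsup$ (points lying in an ``odd-indexed child'' $I_{A_n(2j-1)}$ for infinitely many $n$), whereas $F$ is a $\liminf$ (points lying in an ``even-indexed child'' $J_{A_m(2j-2)}$ for all $m \ge$ some threshold). Since $h$ carries $I_{A_n(a)}$ exactly onto $J_{A_n(a)}$, a point $x$ lands in an odd-child at level $n$ if and only if $h(x)$ lands in the odd-child $J_{A_n(a)}$ at level $n$; hence $h$ maps ``$x$ is in an odd-child at level $n$'' to ``$h(x)$ is in an odd-child at level $n$.'' So $h(E)$ is the set of points in a $J$-odd-child for infinitely many $n$ — i.e. the $J$-analogue of $E$ — and the key point is that this equals $F$, the set of points in a $J$-\emph{even}-child for all sufficiently large $m$, because at each level every point is in exactly one child, which is either odd-indexed or even-indexed, so ``in an odd child infinitely often'' is the complement of ``in an even child eventually''... wait, that gives $h(E) = \Sone \setminus F$-type statement rather than $h(E)=F$; I would instead read the definitions carefully and note that $F$ as written is built from the \emph{even}-indexed children and the correct reading is that $h(E)$, the complement-type set, is exactly $J \setminus (\text{even-eventually set})$, and a short symmetric argument with the roles of odd/even interchanged (using that $L_{n+1}$ is even, so odd and even children come in equal numbers and $\bigcup_j J_{A_n(2j-1)}$ together with $\bigcup_j J_{A_n(2j-2)}$ partition $J_{A_n}$) shows $h(I \setminus E) = F$ — which is in any case the statement we need, not $h(E)=F$.

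Finally, assuming $\capacity(E) = 0$ and $\capacity(F) = 0$: by construction $h\colon I \to J$ is an orientation-preserving homeomorphism with $h(I \setminus E) = F$, a zero capacity set, so $h$ is log-singular with log-singular set $E$; and $E$ is a zero-capacity Borel set (it is a $G_\delta$-like $\limsup$ of closed sets, hence Borel) admitting the log-singular map $h$ with $h(I\setminus E)$ of zero capacity, so $E$ is a log-singular set; symmetrically $F = h(I \setminus E)$ is a log-singular set by the remark following the definition (the image of a log-singular set under a log-singular map is log-singular). The one technical nuisance to handle with care — and the step I expect to be the main obstacle — is not the capacity estimates (which are hypotheses here) but making the piecewise-affine extension genuinely continuous and bijective \emph{across} the exceptional limit points, i.e. checking that the mesh hypothesis is exactly strong enough that every infinite branch $I_{A_\infty}$ degenerates to a point and that the left-to-right ordering of addresses is preserved by $h$ so that no gaps or overlaps are created in the limit; this is a standard but slightly delicate ``define a monotone map on a dense order-isomorphic set and extend'' argument.
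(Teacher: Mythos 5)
Your proof is correct and follows essentially the same route as the paper: build $h$ as a uniform limit of piecewise-affine maps matching $I_{A_n}$ to $J_{A_n}$ through the shared address structure, then deduce $h(I\setminus E)=F$ from the odd/even complementarity of children at each level. If anything you are more explicit than the paper about the combinatorial step verifying $h(I\setminus E)=F$ (the paper simply asserts $\capacity(h(\Sone\setminus E))=\capacity(F)$ without spelling out that ``in an odd-indexed child infinitely often'' is the complement of ``eventually always in an even-indexed child'').
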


\begin{remark}
In the conditions of Lemma \ref{lemma:logsingular}, given $A_{\infty}=\{a_{1},a_{2},\ldots\}\in\mathcal{A}_{\infty}$, then using the notation from (\ref{equation:3}), $$ I_{A_{\infty}}=\bigcap_{n\geq1}I_{\{a_{1},\ldots,a_{n}\}}=\{x_{A_{\infty}}\}$$ is a singleton. Hence we can define the map $\chi\colon\mathcal{A_{\infty}}\to[0,1]$ by $\chi(A_{\infty})=x_{A_{\infty}}$.

Therefore, the set $E$ in Lemma \ref{lemma:logsingular} corresponds to the image under $\chi$ of all possible $A_{\infty}=\{a_{1},a_{2},\ldots\}$, so that the sequence $\{a_{n}\}_{n}$ consists eventually of odd integers.
\end{remark}

\begin{proof}[Proof of Lemma \ref{lemma:logsingular}]
To prove the lemma we need to build a log-singular map $h\colon I\to J$ so that $h(\Sone\setminus E)=F$ . We will build $h$ in an iterative way by using the addresses $A_{n}$ and the intervals $I_{A_{n}}, J_{A_{n}}$ that are associated to them.

Take a linear bijective map $h_{0}\colon I\to J$. We can define a homeomorphism $h_{1}\colon I\to J$ that is linear on each $I_{\{j\}}$ and $h_{1}(I_{\{j\}})=J_{\{j\}}$ for $0\leq j<L_{1}$.

Suppose that at the $n$-th step, for $n>1$, we have a homeomorphism $h_{n}\colon I\to J$ which is linear on each $I_{A_{n}}$ and $h(I_{A_{n}})=J_{A_{n}}$. We define a homeomorphism $h_{n+1}\colon I\to J$ in the following way.
\begin{enumerate}[label=(\alph*)]
	\item $h_{n+1}$ is linear on each $I_{A_{n+1}}$.
	\item $h_{n+1}(I_{A_{j}})=J_{A_{j}}$ for every $j\leq n$.
\end{enumerate}
This procedure is illustrated in Figure \ref{figure:explanation_log-singular}. 

\begin{figure}[h]
\includegraphics[scale=1.5]{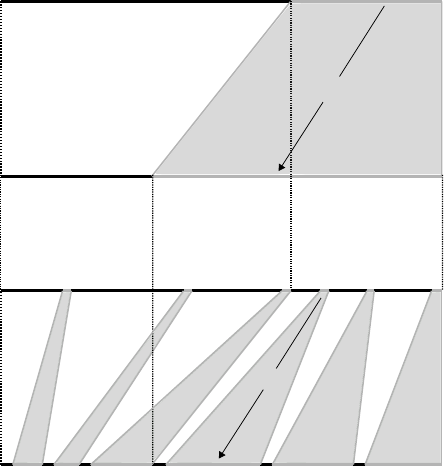}
\centering
\setlength{\unitlength}{\textwidth}
\put(-0.314,0.145){$h_{2}$}
\put(-0.14,0.323){$I_{\{1,3\}}$}
\put(-0.75,0.323){$I_{\{0,0\}}$}
\put(-0.39,0.323){$I_{\{0,4\}}$}
\put(-0.265,0.323){$I_{\{1,0\}}$}
\put(-0.205,0.653){$h_{1}$}
\put(-0.31,-0.024){$J_{\{1,3\}}=h(I_{\{1,3\}})$}
\put(-0.43,-0.024){$J_{\{1,1\}}$}
\put(-0.59,-0.024){$J_{\{0,5\}}$}
\put(-0.6,0.787){$I_{\{0\}}$}
\put(-0.17,0.787){$I_{\{1\}}$}
\put(-0.7,0.522){$h(I_{\{0\}})=J_{\{0\}}$}
\put(-0.24,0.522){$h(I_{\{1\}})=J_{\{1\}}$}
\caption{Sketch of the proof of Lemma \ref{lemma:logsingular} where two different partitions are \textit{matched up}. The figure represents two iterations of the process in the proof of Lemma \ref{lemma:logsingular}. The lighter intervals, like $I_{\{1,3\}}$, have small capacity in the domain and are mapped to intervals with larger capacity, like $J_{\{1,3\}}=h(I_{\{1,3\}})$. At each step $n$, the maps $h_{n}$ are linear on each $I_{A_{n}}$.}
\label{figure:explanation_log-singular}
\end{figure}

By construction they also satisfy that, if $n\leq m$, then $h_{m}(I_{A_{n}})=J_{A_{n}}$ and $h_{n}(I_{A_{m}})\subset J_{A_{m-1}}$. By hypothesis, given $\eps>0$ there is $N>0$ so that for $n\geq N$ the segments $J_{A_{n}}=h(I_{A_{n}})$ all have length less than $\eps$. Now, if $m\geq n$ and $x\in I$, there exists a word $A_{n}$ so that $x\in I_{A_{n}}$ and $h_{m}(x)\in J_{A_{n}}=h_{n}(I_{A_{n}})$. Hence, $$|h_{n}(x)-h_{m}(x)|<\eps.$$ Therefore the sequence $\{h_{n}\}$ is uniformly Cauchy and it has a continuous limit $h\colon I\to J$, which preserves orientation. 

If $z\not=w$, there are addresses $A_{\infty},\tilde{A}_{\infty}\in\mathcal{A}_{\infty}$ so that $\chi(A_{\infty})=z$ and $\chi(\tilde{A}_{\infty})=w$. Since $z\not=w$, for some $n>1$, there are finite different subwords $A_{n},\tilde{A}_{n}$ satisfying $z\in I_{A_{n}}$ and $w\in I_{\tilde{A}_{n}}$. Therefore $I_{A_{n}}$ and $I_{\tilde{A}_{n}}$ have disjoint interiors and so do $J_{A_{n}}=h(I_{A_{n}})$ and $J_{\tilde{A}_{n}}=h(I_{\tilde{A}_{n}})$. Hence $h(z)\not=h(w)$ and $h$ is an orientation-preserving homeomorphism.

To finish the proof we need to show that the map $h\colon I\to J$ is log-singular. By hypothesis $\capacity(E)=0$ and $\capacity(h(\Sone\setminus E))=\capacity(F)=0$, where $E, F$ are given by the Lemma. Therefore $h$ is log-singular. \end{proof}

To prove that there are capacity zero sets as in Lemma \ref{lemma:logsingular} one can take the identity map in Lemma \ref{lemma:find_logsingular}. It also follows from the construction in \cite[Remark 9]{ChrisWeldingAnnals} and \cite[Proposition 2.3]{Younsi:Welding_Ex}. 

\begin{remark} It can be proved that given any log-singular map $h\colon I\to J$, the log-singular set $E$ can be taken as in Lemma \ref{lemma:logsingular} (in terms of a suitable partition $\{I_{A_{n}}\}$ of $I$). The proof follows from \cite[Lemma 11]{ChrisWeldingAnnals} and the following observation: if $h\colon I\to J$ is log-singular, then for any closed sub-arc $S\subset I$, the restriction of $h$ to $S$, $h_{S}\colon S\to h(S),$ is also log-singular. 
\end{remark}


\section{Finding regular log-singular sets. Proof of the main theorem}\label{section:finding}

In this section we prove that given a circle homeomorphism $\phi\colon\Sone\to\Sone$ there is a log-singular set $E\subset\Sone$, given by a partition as in the previous section, so that $\capacity(\phi(E))=0$. This is what we prove in Lemma \ref{lemma:find_logsingular}. Together with Lemma \ref{lemma:logsingular} it completes the proof of Theorem \ref{theorem:mainWelding2}.

\begin{lemma}[Finding regular log-singular sets]\label{lemma:find_logsingular}
Let $\phi\colon\Sone\to\Sone$ be an orientation-preserving circle homeomorphism. Then there exists partition $\{I_{A_{n}}\}$ of $\Sone$ and a log-singular set 
$$ E=\bigcap_{m\in\N}\bigcup_{n\geq m} \left(\bigcup_{A_{n}\in\mathcal{A}_{n}}\bigcup_{j=1}^{L_{n+1}/2} I_{A_{n}(2j-1)}\right),$$
so that $\capacity(\phi(E))=0$. Where the sequence $\{L_{n}\}$ associated with $\{A_{n}\}$ satisfies $L_{0}=1$ and $L_{n}$ even for $n\geq1$.
\end{lemma}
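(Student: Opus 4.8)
The plan is to construct the partition $\{I_{A_n}\}$ of $\Sone$ together with the auxiliary partition $\{J_{A_n}\}$ of $\Sone$ defined by $J_{A_n}=\phi(I_{A_n})$, chosen inductively so that at each generation the ``odd-labeled'' children carry almost all of the parent's capacity on the $\phi$-side while the same children are individually tiny in capacity on the domain side — and symmetrically the ``even-labeled'' children are individually tiny on the $\phi$-side. Concretely, suppose at stage $n$ we have built $I_{A_n}$ (an arc) and $J_{A_n}=\phi(I_{A_n})$. Since $\phi$ is a homeomorphism, $\phi|_{I_{A_n}}$ is uniformly continuous, so we may first choose a large even integer $L_{n+1}$ and a preliminary subdivision of $I_{A_n}$ into sub-arcs so fine that each piece, and each of its $\phi$-images, has capacity (equivalently diameter, up to the factor $1/4$) below a target $\eps_{n+1}$ that we will send to $0$; this guarantees the shrinking hypothesis $|I_{A_n}|+|J_{A_n}|\to0$ needed to invoke Lemma~\ref{lemma:logsingular}. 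The real work is to arrange the labeling so that on the $J$-side, almost the entire length of $J_{A_n}$ is concentrated in the even-indexed sub-arcs $J_{A_n(2j-2)}$ (so the odd-indexed $J_{A_n(2j-1)}$ have total length, hence total capacity by Proposition~\ref{proposition:capacity_properties}(4), at most $\eps_{n+1}2^{-n}$), while on the $I$-side the odd-indexed pieces $I_{A_n(2j-1)}$ are individually of capacity $<\eps_{n+1}$. This is arranged by: (i) picking points $0=t_0<t_1<\cdots<t_{L_{n+1}}=|J_{A_n}|$ along $J_{A_n}$ so that the ``even gaps'' $[t_{2j-2},t_{2j-1}]$ eat up all but $\eps_{n+1}2^{-n}$ of the length and the ``odd gaps'' $[t_{2j-1},t_{2j}]$ are each shorter than $\eps_{n+1}2^{-n}/(L_{n+1}/2)$; (ii) transporting these points back by $\phi^{-1}$ to get the subdivision of $I_{A_n}$, then refining $L_{n+1}$ further (and re-subdividing) if necessary so that each odd-indexed $I$-piece also has capacity $<\eps_{n+1}$ — here we use that $\phi^{-1}$ is uniformly continuous to control the preimages and that the odd $I$-pieces can be made small by cutting them more finely, noting that increasing $L_{n+1}$ only helps on the $J$-side too.

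Once the partitions are built this way, set $E=\bigcap_m\bigcup_{n\ge m}\bigcup_{A_n}\bigcup_{j=1}^{L_{n+1}/2}I_{A_n(2j-1)}$ and $F=\bigcup_m\bigcap_{n\ge m}\bigcup_{A_n}\bigcup_{j=1}^{L_{n+1}/2}J_{A_n(2j-2)}$, exactly the sets from Lemma~\ref{lemma:logsingular}. To apply that lemma (and the Remark identifying $F=\phi(\Sone\setminus E)$, which follows because $x\notin E$ iff the address of $x$ is eventually even, iff $\phi(x)$ lies in the even-branch intersection $F$) I need $\capacity(E)=0$ and $\capacity(F)=0$. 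For $\capacity(E)=0$: for each fixed $m$, the set $\bigcup_{n\ge m}\bigcup_{A_n}\bigcup_j I_{A_n(2j-1)}$ is covered by the arcs $I_{A_n(2j-1)}$, and for fixed $n$ the total capacity of these over all $A_n$ and $j$ is at most $2^{-n}$ by the subadditivity Proposition~\ref{proposition:capacity_properties}(4) together with the fact that each such arc has capacity $<\eps_{n+1}$ and there are $|\mathcal A_n|\cdot L_{n+1}/2$ of them — so I must be slightly more careful and instead demand directly in the construction that $\sum_{A_n\in\mathcal A_n}\sum_{j=1}^{L_{n+1}/2}\capacity(I_{A_n(2j-1)})<2^{-n}$, which is achievable since given $\mathcal A_n$ finite I can pick $L_{n+1}$ and the odd pieces fine enough. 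Then $\capacity(\bigcup_{n\ge m}\cdots)\le\sum_{n\ge m}2^{-n}=2^{-m+1}$ (again by (4), diameters $\le1$), and letting $m\to\infty$ with monotonicity gives $\capacity(E)=0$. The capacity bound for $F$ is dual: $F\subset\bigcup_{A_n}\bigcup_j J_{A_n(2j-1)}$... no — rather, $\Sone\setminus F\supset$ the odd $J$-branches, so I bound $\capacity$ of the complementary ``eventually odd'' $J$-set, which by the same argument has capacity $0$; but $F$ itself need not have zero capacity this way. Instead, to force $\capacity(F)=0$ I use that the hypothesis of the Lemma only requires $\capacity(F)=0$, and I achieve it by the symmetric requirement $\sum_{A_n}\sum_{j=1}^{L_{n+1}/2}\capacity(J_{A_n(2j-1)})<2^{-n}$ — wait, that controls the odd $J$-pieces, whereas $F$ is built from even $J$-pieces. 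The resolution: $F$ is a $\bigcup_m\bigcap_{n\ge m}$, and a point of $F$ has address eventually even; but ``eventually even'' addresses, as a subset of $\Sone$ under $\chi_J$, equals $\phi(\Sone\setminus E)$, and $\capacity(\phi(E))$ is what the Lemma statement asserts is $0$ — so $\capacity(F)=\capacity(\phi(\Sone\setminus E))$, which is NOT automatically $0$.

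So the correct structure is: I do \emph{not} need $\capacity(F)=0$ for the present Lemma~\ref{lemma:find_logsingular} — I need $\capacity(E)=0$ and $\capacity(\phi(E))=0$, and $\phi(E)$ is the ``eventually odd'' $J$-set $\bigcap_m\bigcup_{n\ge m}\bigcup_{A_n}\bigcup_j J_{A_n(2j-1)}$. Thus the two requirements I impose in the construction are precisely (a) $\sum_{A_n\in\mathcal A_n}\sum_{j=1}^{L_{n+1}/2}\capacity(I_{A_n(2j-1)})<2^{-n}$ and (b) $\sum_{A_n\in\mathcal A_n}\sum_{j=1}^{L_{n+1}/2}\capacity(J_{A_n(2j-1)})=\sum_{A_n}\sum_j\capacity(\phi(I_{A_n(2j-1)}))<2^{-n}$, both simultaneously achievable at stage $n+1$ by choosing $L_{n+1}$ large and the odd $I$-pieces fine: by uniform continuity of $\phi$ on the (compact) arc $I_{A_n}$, making $\diam I_{A_n(2j-1)}$ small forces $\diam J_{A_n(2j-1)}$ small, so both sums can be driven below $2^{-n}$ after dividing $I_{A_n}$ finely enough into $L_{n+1}$ pieces with the odd ones chosen tiny. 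Then by Proposition~\ref{proposition:capacity_properties}(4) and monotonicity, $\capacity(E)=0$ and $\capacity(\phi(E))=0$; meanwhile the shrinking hypothesis $|I_{A_n}|+|J_{A_n}|\to0$ is arranged by also insisting all pieces at stage $n$ have diameter $<1/n$. Feeding the partitions $\{I_{A_n}\}$ and $\{J_{A_n}=\phi(I_{A_n})\}$ into Lemma~\ref{lemma:logsingular} then produces a log-singular map $h\colon\Sone\to\Sone$ with $h(\Sone\setminus E)=F$, exhibiting $E$ as a log-singular set with $\capacity(\phi(E))=0$, as required.

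\textbf{Main obstacle.} The delicate point is the simultaneous control at each stage: I must subdivide $I_{A_n}$ into an even number $L_{n+1}$ of consecutive sub-arcs such that the odd-numbered ones are, collectively over \emph{all} addresses $A_n\in\mathcal A_n$, of total capacity $<2^{-n}$ \emph{and} have $\phi$-images of total capacity $<2^{-n}$, while the partition still exhausts $I_{A_n}$ (property (iv) of addresses) and all pieces shrink. Since $\mathcal A_n$ is finite at each stage and $\phi$ (and $\phi^{-1}$) is uniformly continuous on each arc, this is a finite bookkeeping exercise — choose the odd pieces short enough, then increase $L_{n+1}$ so the even pieces also fit — but it must be done uniformly over the finitely many $A_n$, and one must double-check that the ``eventually odd'' description of $\phi(E)$ (via the Remark after Lemma~\ref{lemma:logsingular}, applied to the $J$-partition) matches the set-builder formula in the statement.
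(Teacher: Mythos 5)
Your construction of the partition $\{I_{A_n}\}$ — inductively cutting each $I_{A_n}$ into an even number of consecutive sub-arcs so that the odd-indexed children and their $\phi$-images both have total capacity $<2^{-n}$, using uniform continuity of $\phi$ on each compact arc, and concluding $\capacity(E)=\capacity(\phi(E))=0$ by subadditivity and monotonicity — is exactly what the paper does, and that part is sound. You even correctly diagnose, mid-proof, that taking $J_{A_n}=\phi(I_{A_n})$ in Lemma~\ref{lemma:logsingular} would force $\capacity(F)=\capacity(\phi(\Sone\setminus E))$, which is not zero (indeed, subadditivity gives $\capacity(\phi(\Sone\setminus E))\ge 1-\capacity(\phi(E))=1$).

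But your closing sentence reinstates precisely the error you had just identified: you finish by ``feeding the partitions $\{I_{A_n}\}$ and $\{J_{A_n}=\phi(I_{A_n})\}$ into Lemma~\ref{lemma:logsingular}''. That application is illegitimate: the hypothesis $\capacity(F)=0$ fails, and worse, the piecewise-linear approximants in the proof of Lemma~\ref{lemma:logsingular} would converge to a map $h$ that agrees with $\phi$ on the dense set of all interval endpoints, so $h=\phi$, which is not log-singular in general. Hence this step does not certify that $E$ is a log-singular set, which is part of what the lemma asserts. The fix is small but necessary: having fixed $\{I_{A_n}\}$ (and hence the sequence $\{L_n\}$), build an \emph{independent} partition $\{J_{A_n}\}$ of $\Sone$ with the same branching $\{L_n\}$, the same shrinking of diameters, and with the even-indexed $J$-children chosen to have total capacity $<2^{-n}$ at each level, so that $\capacity(F)=0$. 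Then Lemma~\ref{lemma:logsingular} applies and yields a genuine log-singular $h$ with $h(\Sone\setminus E)=F$, which is what makes $E$ a log-singular set; the identity $\capacity(\phi(E))=0$ is the separate estimate you already established. This is how the paper uses Lemma~\ref{lemma:logsingular}: the $J$-partition there is a free auxiliary choice, not $\phi(I_{A_n})$.
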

\begin{proof}

We can suppose $\phi\colon[0,1]\to[0,1]$. The goal is to decompose $[0,1]$ in an iterative way by defining a partition $\{I_{A_{n}}\}$ of $[0,1]$. However, now we need to ensure that the $I_{A_{n}(2j-2)}$ are mapped to intervals with small capacity (as opposed to Lemma \ref{lemma:logsingular}). This is represented in Figure \ref{figure:finding_logsingular_set}.

Write $I=I_{\{0\}}\cup I_{\{1\}}$, where both $I_{\{0\}}$ and $I_{\{1\}}$ are closed intervals with disjoint interiors so that $\capacity(I_{\{1\}})<2^{-1}$. Make $I_{\{1\}}$ smaller (and thus $I_{\{0\}}$ bigger) so that we also have $\capacity(\phi(I_{\{1\}}))=\capacity(J_{\{1\}})<2^{-1}$.

\begin{figure}[h]
\includegraphics[scale=1]{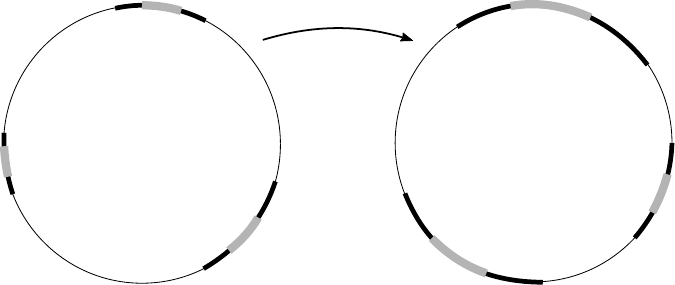}
\centering
\setlength{\unitlength}{\textwidth}
\put(-0.404,0.27){$\phi$}
\put(-0.545,0.07){$I_{\{j\}}$}
\put(-0.253,0.04){$J_{\{j\}}=\phi(I_{\{j\}})$}
\caption{Finding the partition in Lemma \ref{lemma:find_logsingular}. The lighter thicker sub-arcs have small capacity and they are mapped to sub-arcs with small capacity.}
\label{figure:finding_logsingular_set}
\end{figure}

Suppose that at the $n$-th step, for $n>1$, we have intervals $I_{A_{n}}$ and $J_{A_{n}}=h(I_{A_{n}})$, where $A_{n}=\{a_{1}, a_{2}, \ldots, a_{n}\}$, with $0\leq a_{j}< L_{j}$, is the address of $I_{A_{n}}$, so that 
$$ \capacity\left(\bigcup_{A_{n-1}\in\mathcal{A}_{n-1}}\bigcup_{j=1}^{L_{n}/2} I_{A_{n-1}(2j-1)}\right)\leq \sum_{A_{n-1}\in\mathcal{A}_{n-1}}\sum_{j=1}^{L_{n}/2} \capacity\left(I_{A_{n-1}(2j-1)}\right)< 2^{-n},$$
and their image also has capacity less than $2^{-n}$, i.e. 
$$ \capacity\left(\bigcup_{A_{n-1}\in\mathcal{A}_{n-1}}\bigcup_{j=1}^{L_{n}/2} J_{A_{n-1}(2j-1)}\right)\leq \sum_{A_{n-1}\in\mathcal{A}_{n-1}}\sum_{j=1}^{L_{n}/2} \capacity\left(J_{A_{n-1}(2j-1)}\right)< 2^{-n},$$

We build the $I_{A_{n+1}}$ in the following way: divide each $I_{A_{n}}$ first into $m\geq n$ closed segments with disjoint interior of length less than or equal than $\left(\max_{I_{A_{n}}}|I_{A_{n}}|\right)/n$ so that their images are intervals of length less than or equal than $\left(\max_{J_{A_{n}}}|J_{A_{n}}|\right)/n$. Then divide each of them into two closed segments which yields intervals $I_{A_{n+1}}$ and $J_{A_{n+1}}=h(I_{A_{n+1}})$. By making the $I_{A_{n}(2j-1)}$ smaller we obtain:
$$ \capacity\left(\bigcup_{A_{n}\in\mathcal{A}_{n}}\bigcup_{j=1}^{m L_{n}} I_{A_{n}(2j-1)}\right)\leq \sum_{A_{n}\in\mathcal{A}_{n}}\sum_{j=1}^{m L_{n}} \capacity\left(I_{A_{n}(2j-1)}\right)< 2^{-n-1},$$
and that their image also has capacity less than $2^{-n-1}$, i.e.
$$ \capacity\left(\bigcup_{A_{n}\in\mathcal{A}_{n}}\bigcup_{j=1}^{m L_{n}} J_{A_{n}(2j-1)}\right)\leq \sum_{A_{n}\in\mathcal{A}_{n}}\sum_{j=1}^{m L_{n}} \capacity\left(J_{A_{n}(2j-1)}\right)< 2^{-n-1}.$$
If we define $E$ as in the Lemma, then by sub-additivity of capacity we have,
$$ \capacity(E)+\capacity(\phi(E))\leq\sum_{n\geq m+1}(2^{-n}+2^{-n})=2^{-m}.$$
Therefore $\capacity(E)=\capacity(\phi(E))=0$.
\end{proof}

\begin{remark}
Observe that the proof of Lemma \ref{lemma:find_logsingular} can be modified to obtain a log-singular set $E$ satisfying $\capacity(E)=\capacity(\phi(E))=\capacity(\phi^{-1}(E))=0$.
\end{remark}

We finish this section by proving Theorem \ref{theorem:mainWelding} and Theorem \ref{theorem:mainWelding2}.

\begin{proof}[Proof of Theorem \ref{theorem:mainWelding} and Theorem \ref{theorem:mainWelding2}]
Given $\phi\colon\Sone\to\Sone$, then by Lemma \ref{lemma:find_logsingular} there is a log-singular set $E\subset\Sone$ so that $\capacity(\phi(E))=0$. By Lemma \ref{lemma:logsingular} there exists a log-singular circle homeomorphism $h$ satisfying $\capacity(h(\Sone\setminus E))=0$. If we consider the circle homeomorphism $\phi\circ h^{-1}$, then $h(\Sone\setminus E)$ has zero capacity and
$$ (\phi\circ h^{-1})(\Sone\setminus h(\Sone\setminus E))=\phi(E),$$
which has zero capacity. Therefore $\phi\circ h^{-1}$ is log-singular. Since log-singular maps are weldings by Theorem \ref{Theorem:ChrisLogSingular}, then 
$$ \phi=\left(\phi\circ h^{-1}\right)\circ h$$
is a composition of two conformal weldings.
\end{proof}


\bibliographystyle{amsalpha}
\bibliography{references}

\end{document}